\documentclass{article}

\usepackage{amsmath,amssymb,amsthm,mathrsfs}
\usepackage{hyperref}

\topmargin -1cm
\textheight 21cm
\textwidth 15cm
\oddsidemargin 1cm

\setlength{\parindent}{0pt}

\newtheorem{theorem}{Theorem}[section]
\newtheorem{proposition}[theorem]{Proposition}
\newtheorem{lemma}[theorem]{Lemma}

\theoremstyle{definition}\newtheorem{example}[theorem]{Example}
\theoremstyle{definition}\newtheorem{definition}[theorem]{Definition}
\theoremstyle{definition}\newtheorem{remark}[theorem]{Remark}


\DeclareMathOperator*{\argmin}{argmin}

\def\di{\displaystyle}

\def\R{\mathbb{R}}
\def\N{\mathbb{N}}
\def\H{\mathrm{H}}
\def\I{\mathrm{I}}
\def\M{\mathrm{M}}
\def\D{\mathrm{D}}
\def\RR{\mathrm{R}}
\def\K{\mathrm{K}}
\def\VI{\mathrm{VI}}
\def\P{\mathrm{P}}
\def\T{\mathcal{DR}}
\def\TT{\overline{\T}}
\def\BF{\mathcal{FB}}
\def\BFF{\overline{\BF}}

\def\dom{\mathrm{dom}}
\def\prox{\mathrm{prox}}
\def\proj{\mathrm{proj}}
\def\Fix{\mathrm{Fix}}
\def\Sol{\mathrm{Sol}}
\def\int{\mathrm{int}}
\def\cl{\mathrm{cl}}


\usepackage{tikz}
\usetikzlibrary{decorations.pathreplacing}
\usetikzlibrary{decorations.pathmorphing}
\usetikzlibrary{arrows,shapes}
\usepackage{pstricks,pst-grad}

\newrgbcolor{white}{1. 1. 1.}
\newrgbcolor{lightblue}{0. 0. 0.80}
\newrgbcolor{whiteblue}{.80 .80 1.}
\newrgbcolor{lightred}{.85 0. 0.}
\newrgbcolor{lightred2}{.85 0.40 0.40}
\newrgbcolor{whitered}{0.85 0.45 0.45}
\newrgbcolor{lightgreen}{0. 0.50 0.}
\newrgbcolor{whitegreen}{0.60 0.95 0.40}
\newrgbcolor{lightbrown}{0.90 0.35 0.05}
\newrgbcolor{whitebrown}{0.95 0.55 0.20}
\definecolor{bovert}{RGB}{0,154,0}


\linespread{1.1}

\title{On a decomposition formula for the proximal operator of the sum of two convex functions}

\author{Samir Adly\footnote{Institut de recherche XLIM. UMR CNRS 7252. Universit\'e de Limoges, France. \texttt{samir.adly@unilim.fr}},
Lo\"ic Bourdin\footnote{Institut de recherche XLIM. UMR CNRS 7252. Universit\'e de Limoges, France. \texttt{loic.bourdin@unilim.fr}}, 
Fabien Caubet\footnote{Institut de Math\'ematiques de Toulouse. UMR CNRS 5219. Universit\'e de Toulouse, France. \texttt{fabien.caubet@math.univ-toulouse.fr}}
}

\begin{document}

\maketitle

\begin{abstract}
The main result of the present theoretical paper is an original decomposition formula for the proximal operator of the sum of two proper, lower semicontinuous and convex functions~$f$ and~$g$. For this purpose, we introduce a new operator, called {\it $f$-proximal operator of~$g$} and denoted by $\prox^f_g$, that generalizes the classical notion. Then we prove the decomposition formula~$\prox_{f+g} = \prox_f \circ \prox^f_g$. After collecting several properties and characterizations of~$\prox^f_g$, we prove that it coincides with the fixed points of a generalized version of the classical Douglas-Rachford operator. This relationship is used for the construction of a weakly convergent algorithm that computes numerically this new operator~$\prox^f_g$, and thus, from the decomposition formula, allows to compute numerically~$\prox_{f+g}$. It turns out that this algorithm was already considered and implemented in previous works, showing that~$\prox^f_g$ is already present (in a hidden form) and useful for numerical purposes in the existing literature. However, to the best of our knowledge, it has never been explicitly expressed in a closed formula and neither been deeply studied from a theoretical point of view. The present paper contributes to fill this gap in the literature. Finally we give an illustration of the usefulness of the decomposition formula in the context of sensitivity analysis of linear variational inequalities of second kind in a Hilbert space.
\end{abstract}

\textbf{Keywords:} convex analysis; proximal operator; Douglas-Rachford operator; Forward-Backward operator.

\medskip

\textbf{AMS Classification:} 46N10; 47N10; 49J40; 49Q12.

\tableofcontents

\section{Introduction, notations and basics}

\subsection{Introduction}

The \textit{proximal operator} (also known as \textit{proximity operator}) of a proper, lower semicontinuous, convex and extended-real-valued function was first introduced by~J.-J.~Moreau in~1962 in \cite{moreau62bis,moreau65} and can be viewed as an extension of the projection operator on a nonempty closed and convex subset of a Hilbert space. This wonderful tool plays an important role, from both theoretical and numerical points of view, in applied mathematics and engineering sciences. This paper fits within the wide theoretical literature dealing with the proximal operator. For the rest of this introduction, we use standard notations of convex analysis. For the reader who is not acquainted with convex analysis, we refer to Section~\ref{secnot} for notations and basics.


\paragraph{\textbf{Motivations from a sensitivity analysis.}} 

The present work was initially motivated by the sensitivity analysis, with respect to a nonnegative parameter $t \geq 0$, of a parameterized linear variational inequality of second kind in a Hilbert space~$\H$, with a corresponding function~$h \in \Gamma_{0}(\H)$, where $\Gamma_0(\H)$ is the set of proper, lower semicontinuous and convex functions from $\mathrm{H}$ into $\mathbb{R} \cup \left\lbrace + \infty \right\rbrace$. 
More precisely, for all $t \geq 0$, we consider the problem of finding $u(t) \in \H$ such that
\begin{equation} \label{VIintro}
\langle u(t) , z - u(t) \rangle + h(z) - h(u(t)) \geq \langle r(t) , z - u(t) \rangle ,
\end{equation}
for all $z \in \H$, where $r : \R^+ \to \H$ is assumed to be given and smooth enough. 
In that framework, the solution~$u(t) \in \H$ (which depends on the parameter $t$) can be expressed in terms of the proximal operator of $h$ denoted by $\prox_h$. Precisely it holds that $u(t)=\prox_{h} (r(t))$ for all $t \geq 0$. As a consequence, the differentiability of~$u(\cdot)$ at $t=0$ is strongly related to the regularity of $\prox_h$. If $h$ is a smooth function, one can easily compute (from the classical inverse mapping theorem for instance) the differential of $\prox_{h}$, and then the sensitivity analysis can be achieved. In that smooth case, note that the variational inequality~\eqref{VIintro} can actually be reduced to an equality. On the other hand, if $h=\iota_{\K}$ is the indicator function of a nonempty closed and convex subset $\K \subset \H$, then $\prox_{h} = \proj_{\K}$ is the classical projection operator on $\K$. In that case, the work of F.~Mignot in~\cite[Theorem~2.1 p.145]{Mig76} (see also the work of A.~Haraux in~\cite[Theorem~2 p.620]{haraux}) provides an asymptotic expansion of $\prox_h = \proj_{\K}$ and permits to obtain a differentiability result on $u(\cdot)$ at~$t=0$. 

\medskip

In a parallel work (in progress) of the authors on some shape optimization problems with unilateral contact and friction, the considered variational inequality~\eqref{VIintro}
involves the sum of two functions. Precisely, $h=f+g$ where $f = \iota_\K$ ($\K$ being a nonempty closed and convex set of constraints), and where~$g \in \Gamma_{0}(\H)$ is a smooth function (derived from the regularization of the friction functional in view of a numerical treatment). Despite the regularity of~$g$, note that the variational inequality~\eqref{VIintro} cannot be reduced to an equality due to the presence of the constraint set~$\K$. In that framework, in order to get an asymptotic expansion of~$\prox_{h} = \prox_{f+g}$, a first and natural strategy would be to look for a convenient explicit expression of $\prox_{f+g}$ in terms of $\prox_f$ and~$\prox_g$. Unfortunately, this theoretical question still remains an open challenge in the literature. Let us mention that Y.-L.~Yu provides in \cite{Yu13} some necessary and/or sufficient conditions on general functions $f$, $g \in \Gamma_0 (\H)$ under which~$\prox_{f+g} = \prox_f \circ \prox_g$. Unfortunately, as underlined by the author himself, these conditions are very restrictive and are not satisfied in most of cases (see, e.g., \cite[Example~2]{Yu13} for a counterexample). 

\medskip

Before coming to the main topic of this paper, we recall that a wide literature is already concerned with the sensitivity analysis of parameterized (linear and nonlinear) variational inequalities. We refer for instance to \cite{BenLions,haraux,QiuMag,Shapiro} and references therein. The results in are considered in very general frameworks. We precise that our original objective was to look for a simple and compact formula for the derivative~$u'(0)$ in the very particular case described above, that is, in the context of a linear variational inequality and with $h = f+g$ where~$f$ is an indicator function and $g$ is a smooth function. For this purpose, we were led to consider the proximal operator of the sum of two functions in $\Gamma_{0}(\H)$, 
to introduce a new operator and finally to prove the results presented in this paper.


\paragraph{\textbf{Introduction of the $f$-proximal operator and main result.}} 
Let us consider general functions $f$, $g \in \Gamma_0 (\H) $. In order to avoid trivialities, we will assume in the whole paper that $\dom (f) \cap \dom (g) \neq \emptyset$ when dealing with the sum $f+g$. 

\medskip

Section~\ref{sec2} is devoted to the introduction (see Definition~\ref{def1}) of a new operator $\prox^f_g : \H \rightrightarrows \H$ called \textit{$f$-proximal operator of $g$} and defined by
\begin{equation}\label{eqdefintro}
\prox^f_{g} := \left( \I + \partial g \circ \prox_{f} \right)^{-1}.
\end{equation}
This new operator can be seen as a generalization of $\prox_g$ in the sense that, if $f$ is constant for instance, then $\prox^f_g = \prox_g$. More general sufficient (and necessary) conditions under which~$\prox^f_g = \prox_g$ are provided in Propositions~\ref{prop3} and \ref{prop4}. We prove in Proposition~\ref{prop1} that the domain of $\prox^f_{g}$ satisfies $\D ( \prox^f_g) = \H $ if and only if $\partial(f+g)=\partial f + \partial g$. Note that $\prox^f_g$ is a set-valued operator \textit{a priori}. We provide in Proposition~\ref{prop2} some sufficient conditions under which $\prox^f_g$ is single-valued. Some examples illustrate all the previous results throughout the section (see Examples~\ref{ex1},~\ref{ex2},~\ref{ex2bis}, \ref{ex3} and \ref{ex4}). 

\medskip

Finally, if the additivity condition $\partial(f+g)=\partial f + \partial g$ is satisfied, the main result of the present paper (see Theorem~\ref{thm1}) is the original decomposition formula
\begin{equation}\label{proxfg}
\mathrm{prox}_{f+g} = \mathrm{prox}_{f} \circ \mathrm{prox}^f_{g}.
\end{equation}
It is well-known in the literature that obtaining a theoretical formula for~$\prox_{f+g}$ is not an easy task in general, even if $\prox_f$ and $\prox_g$ are known. We give a more precise description of the difficulty to obtain an {easy computable} formula of~$\prox_{f+g}$ in Appendix~\ref{app}, which claims that there is no closed formula, independent of $f$ and $g$, allowing to write $\prox_{f+g}$ as a linear combination of compositions of linear combinations of~$\I$, $\prox_f$, $\prox_g$, $\prox_f^{-1}$ and $\prox_g^{-1}$. In the decomposition formula~\eqref{proxfg}, it should be noted that the difficulty of computing~$\prox_{f+g}$ is only transferred to the computation of~$\prox^f_g$ which is not an easier task. Note that other rewritings, which are not suitable for an {easy computation} of~$\prox_{f+g}$ neither, can be considered such as
$$ \prox_{f+g} = ( \prox_f^{-1}+\prox_g^{-1} - \I )^{-1} = ( \prox_{2f}^{-1} + \prox_{2g}^{-1} )^{-1} \circ 2\I, $$
the second equality being provided in~\cite[Corollary 25.35 p.458]{BauCom17}. However we show in this paper that our decomposition formula~\eqref{proxfg} is of theoretical interest in order to prove in a concice and elegant way almost all other new statements of this paper, and also to recover in a simple way some well-known results (see Sections~\ref{sec33} and~\ref{sec41} for instance), making it central in our work. We provide an illustration of this feature in the next paragraph about the classical Douglas-Rachford algorithm. Moreover, as explained in the last paragraph of this introduction, we also prove in this paper the usefulness of the decomposition formula~\eqref{proxfg} in the context of sensitivity analysis of the variational inequality~\eqref{VIintro} (see Section~\ref{sec42}).


\paragraph{\textbf{Relationship with the classical Douglas-Rachford operator.}}

Recall that the proximal operator $\prox_{f+g}$ is strongly related to the minimization problem
$$ \argmin \, f+g , $$
since the set of solutions is exactly the set of fixed points of $\prox_{f+g}$ denoted by $\Fix (\prox_{f+g})$. In the sequel, we will assume that the above minimization problem admits at least one solution. The classical \textit{Douglas-Rachford operator}, introduced in \cite{DouglasRachford} and denoted here by $\T_{f,g}$ (see Section~\ref{sec3} for details), provides an algorithm $x_{n+1} = \T_{f,g} (x_n)$ that is weakly convergent to some $x^* \in \H$ satisfying 
$$ \prox_f (x^*) \in \argmin \, f+g . $$
Even if the \textit{Douglas-Rachford algorithm} is not a \textit{proximal point algorithm} in general, in the sense that $\T_{f,g}$ is not equal to $\prox_\varphi$ for some $\varphi \in \Gamma_0(\H)$ in general, it is a very powerful tool since it allows to solve the above minimization problem, requiring only the knowledge of $\prox_f$ and $\prox_g$. {We refer to~\cite[Section 28.3 p.517]{BauCom17} for more details}.

\medskip

Section~\ref{sec3} deals with the relations between the Douglas-Rachford operator $\T_{f,g}$ and the $f$-proximal operator $\prox^f_g$ introduced in this paper. Precisely, we prove in Proposition~\ref{prop5} that
$$ \prox^f_g (x) = \Fix \left( \TT_{f,g}(x,\cdot) \right) ,$$
for all~$x \in \H$, where $\TT_{f,g}(x,\cdot)$ denotes a $x$-dependent generalization of the classical Douglas-Rachford operator $\T_{f,g}$, in the sense that $\T_{f,g} (y) = \TT_{f,g} ( \prox_f(y),y)$ for all $y \in \H$. We refer to Section~\ref{sec3} for the precise definition of~$\TT_{f,g} (x,\cdot)$ that only depends on the knowledge of $\prox_f$ and $\prox_g$.

\medskip

Let us show that the above statements, in particular the decomposition formula~\eqref{proxfg}, allow to recover in a concise way the well-known inclusion
\begin{equation}\label{eq78654}
\prox_f \left( \Fix \left( \T_{f,g} \right) \right) \subset  \argmin \, f+g = \Fix ( \prox_{f+g} ).
\end{equation}
Indeed, if $x^* \in \Fix (\T_{f,g})$, then $x^* \in \Fix ( \TT_{f,g}(\prox_f(x^*),\cdot) ) = \prox^f_g ( \prox_f(x^*))$. From the decomposition formula~\eqref{proxfg}, we conclude that
$$ \prox_f (x^*) = \prox_f \circ \prox^f_g ( \prox_f (x^*) ) = \prox_{f+g} ( \prox_f(x^*)). $$
This proof of only few lines is an illustration of the theoretical interest of the decomposition formula~\eqref{proxfg}. Note that the above inclusion~\eqref{eq78654} is, as well-known, an equality (see Section~\ref{sec33} and Proposition~\ref{proppourintro} for details).

\medskip

The $f$-proximal operator $\prox^f_g$ introduced in this paper is also of interest from a numerical point of view. Indeed, if~$x \in \D (\prox^f_g)$, we prove in Theorem~\ref{thm2} that the fixed-point algorithm~$y_{k+1} = \TT_{f,g}(x,y_k)$, denoted by~\eqref{eqalgo}, weakly converges to some~$y^* \in \prox^f_g(x)$. Moreover, if the additivity condition~$\partial (f+g) = \partial f + \partial g$ is satisfied, we get from the decomposition formula~\eqref{proxfg} that~$\prox_f (y^*) = \prox_{f+g} (x)$. In that situation, we conclude that Algorithm~\eqref{eqalgo} allows to compute numerically~$\prox_{f+g}(x)$ with the only knowledge of~$\prox_f$ and~$\prox_g$. It turns out that Algorithm~\eqref{eqalgo} was already considered, up to some translations, and implemented in previous works (see, e.g., \cite[Algorithm~3.5]{combdungvu}), showing that the $f$-proximal operator $\prox^f_g$ is already present (in a hidden form) and useful for numerical purposes in the existing literature. However, to the best of our knowledge, it has never been explicitly expressed in a closed formula such as~\eqref{eqdefintro} and neither been deeply studied from a theoretical point of view. The present paper contributes to fill this gap in the literature.

\paragraph{\textbf{Some other applications and forthcoming works.}}

Section~\ref{sec4} can be seen as a conclusion of the paper. Its aim is to provide a glimpse of some other applications of our main result (Theorem~\ref{thm1}) and to raise open questions for forthcoming works. This section is splitted into two parts.

\medskip

In Section~\ref{sec41} we consider the framework where $f$, $g \in \Gamma_0(\H)$ with $g$ differentiable on~$\H$. In that framework, we prove from the decomposition formula~\eqref{proxfg} that $\prox_{f+g}$ is related to the classical \textit{Forward-Backward operator} {(see \cite[Section 10.3 p.191]{ComPes11} for details)} denoted by $\BF_{f,g}$. Precisely, we prove in Proposition~\ref{prop6} that
$$ \prox_{f+g} (x) = \Fix \left( \BFF_{f,g}(x,\cdot) \right) ,$$
for all~$x \in \H$, where~$\BFF_{f,g}(x,\cdot)$ denotes a $x$-dependent generalization of the classical Forward-Backward operator $\BF_{f,g}$. We refer to Section~\ref{sec41} for the precise definition of~$\BFF_{f,g} (x,\cdot)$ that only depends on the knowledge of $\prox_f$ and~$\nabla g$. From this point, one can develop a similar strategy as in Section~\ref{sec3}. Precisely, for all $x \in \H$, one can consider the algorithm $y_{k+1} = \BFF_{f,g} ( x , y_k )$, denoted by~\eqref{eqalgo3}, in order to compute numerically $\prox_{f+g}(x)$, with the only knowledge of $\prox_f$ and $\nabla g$. Convergence proof (under some assumptions on $f$ and~$g$) of~\eqref{eqalgo3} should be the topic of a future work.

\medskip

In Section~\ref{sec42} we turn back to our initial motivation, namely the sensitivity analysis of the parameterized variational inequality~\eqref{VIintro}. Precisely, under some assumptions (see Proposition~\ref{PropSensitivity} for details), we derive from the decomposition formula~\eqref{proxfg} that if
$$ u(t) := \prox_{f+g} (r(t)), $$
for all $t \geq 0$, where $f := \iota_\K$ (where $\K \subset \H$ is a nonempty closed convex subset) and where $g \in \Gamma_0 (\H)$ and $r : \R^+ \to \H$ are smooth enough, then
$$ u'(0) = \prox_{\varphi_f + \psi_g} (r'(0)), $$
where $\varphi_f := \iota_C$ (where $C$ is a nonempty closed convex subset of $\H$ related to $\K$) and where $\psi_g (x) := \frac{1}{2} \langle \D^2 g(u(0))(x),x \rangle $ for all $x \in \H$. It should be noted that the assumptions of Proposition~\ref{PropSensitivity} are quite restrictive, raising open questions about their relaxations {(see Remark~\ref{remfinal})}. This also should be the subject of a forthcoming work.

\subsection{Notations and basics}\label{secnot}

In this section we introduce some notations available throughout the paper and we recall some basics of convex analysis. We refer to standard books like \cite{BauCom17,JBHU,Rock3} and references therein.

\medskip

Let $\H$ be a real Hilbert space and let $\langle \cdot , \cdot \rangle$ (resp. $\Vert \cdot \Vert$) be the corresponding scalar product (resp. norm). For every subset $S$ of $\H$, we denote respectively by $\int (S)$ and~$\cl(S)$ its interior and its closure. In the sequel we denote by $\I : \H \to \H$ the identity operator and by $L_x : \H \to \H$ the affine operator defined by 
$$
L_x (y) := x-y ,
$$
for all $x$, $y \in \H$.

\medskip

For a set-valued map $A : \H \rightrightarrows \H$, the {\it domain} of $A$ is given by 
$$
\D (A) := \{ x \in \H \mid A(x) \neq \emptyset \}.
$$
We denote by $A^{-1} : \H \rightrightarrows \H$ the set-valued map defined by
$$ A^{-1} (y) := \{ x \in \H \mid y \in A(x) \}, $$
for all $y \in \H$. Note that $y \in A(x)$ if and only if $x \in A^{-1} (y)$, for all $x$, $y \in \H$. The {\it range} of $A$ is given by 
$$
\RR (A) := \{ y \in \H \mid A^{-1} (y) \neq \emptyset \} = \D (A^{-1}).
$$
We denote by $\Fix (A)$ the set of all fixed points of $A$, that is, the set given by
$$
\Fix (A) := \{ x \in \H \mid x \in A(x) \}.
$$
Finally, if $A(x)$ is a singleton for all $x \in \D (A)$, we say that $A$ is {\it single-valued}. 


\medskip

For all extended-real-valued functions $g : \H \to \R \cup \{ +\infty \}$, the {\it domain} of $g$ is given by 
$$
\dom (g) := \{ x \in \H \mid g(x) < +\infty \}.
$$
Recall that $g$ is said to be \textit{proper} if $\dom (g) \neq \emptyset$. 

\medskip

Let $g : \H \to \R \cup \{ +\infty \}$ be a proper extended-real-valued function. We denote by~$g^\ast : \H \to \R \cup \{ +\infty \}$ the {\it conjugate} of $g$ defined by
$$ 
g^\ast (y) := \sup_{z \in \H} \; \{ \langle y , z \rangle - g(z) \}, 
$$
for all $y \in \H$. Clearly $g^\ast$ is lower semicontinuous and convex.

\medskip

We denote by $\Gamma_0 (\H)$ the set of all extended-real-valued functions $g : \H \to \R \cup \{ +\infty \}$ that are proper, lower semicontinuous and convex. If $g \in \Gamma_0 (\H)$, we recall that $g^\ast \in \Gamma_0 (\H)$ and that the \textit{Fenchel-Moreau equality}~$g^{\ast \ast} = g$ holds. For all~$g \in \Gamma_0 (\H)$, we denote by $\partial g : \H \rightrightarrows \H$ the {\it Fenchel-Moreau subdifferential} of~$g$ defined by
$$ \partial g (x) := \{ y \in \H \mid \langle y , z - x \rangle \leq g(z) - g(x), \; \forall z \in \H  \}, $$
for all $x \in \H$. It is easy to check that $\partial g$ is a monotone operator and that, for all~$x \in \H$, $0 \in \partial g(x)$ if and only if $x \in \argmin g$. Moreover, for all $x$, $y \in \H$, it holds that $y \in \partial g (x) $ if and only if~$x \in \partial g^\ast (y)$. Recall that, if $g$ is differentiable on $\H$, then~$\partial g (x) = \{ \nabla g (x) \}$ for all $x \in \H$. 

\medskip

Let $A : \H \to \H$ be a single-valued operator defined everywhere on $\H$, and let $g \in \Gamma_0 (\H)$. We denote by~$\VI (A,g)$ the variational inequality which consists of finding $y \in \H$ such that
\begin{equation*}
-A(y) \in \partial g (y),
\end{equation*}
or equivalently,
\begin{equation*}
\langle A(y) , z - y \rangle + g(z) - g(y) \geq 0,
\end{equation*}
for all $z \in \H$. Then we denote by $\Sol_\VI (A,g)$ the set of solutions of~$\VI (A,g)$. Recall that if $A$ is Lipschitzian and strongly monotone, then~$\VI (A,g)$ admits a unique solution, \textit{i.e.} $\Sol_\VI (A,g)$ is a singleton.

\medskip

Let $g \in \Gamma_0 (\H)$. The classical {\it proximal operator} of $g$ is defined by
$$ 
\prox_g := (\I +\partial g )^{-1}. 
$$
Recall that $\prox_g$ is a single-valued operator defined everywhere on $\H$. Moreover, it can be characterized as follows:
$$ 
\prox_g(x) = \argmin \Big(g+ \dfrac{1}{2} \Vert \cdot - x \Vert^2  \Big) = \Sol_\VI (-L_x , g )  ,
$$
for all $x \in \H$. It is also well-known that
$$ 
\Fix ( \prox_g ) = \argmin g. 
$$
The classical {\it Moreau's envelope} $\M_g : \H \to \R$ of $g$ is defined by
$$ \M_g (x) := \min \Big( g+\dfrac{1}{2} \Vert \cdot - x \Vert^2 \Big), $$
for all $x \in \H$. Recall that $\M_g$ is convex and differentiable on $\H$ with $\nabla \M_g = \prox_{g^\ast}$. Let us also recall the classical Moreau's decompositions
$$ \prox_g +\prox_{g^\ast} = \I \qquad \text{and} \qquad \M_g + \M_{g^\ast} = \frac{1}{2} \Vert \cdot \Vert^2. $$

Finally, it is well-known that if $g = \iota_\K$ is the {\it indicator function} of a nonempty closed and convex subset $\K$ of $\H$, that is, $\iota_\K (x) = 0$ if $x \in \K$ and $\iota_\K (x) = +\infty$ if not, then~$\prox_g = \proj_\K$, where~$\proj_\K$ denotes the classical projection operator on $\K$.

\section{The $f$-proximal operator}\label{sec2}

\subsection{Definition and main result}

Let $f$, $g \in \Gamma_0 (\H)$. In this section we introduce (see Definition~\ref{def1}) a new operator denoted by $\prox^f_g$, generalizing the classical proximal operator $\prox_g$. Assuming that $\dom (f) \cap \dom (g) \neq \emptyset$, and under the additivity condition $\partial (f+g) = \partial f + \partial g $, we prove in Theorem~\ref{thm1} that $\prox_{f+g}$ can be written as the composition of $\prox_f$ with $\prox^f_g$. 

\begin{definition}[$f$-proximal operator]\label{def1}
Let $f$, $g \in \Gamma_0 (\H)$. The \textit{$f$-proximal operator} of $g$ is the set-valued map $\prox^f_g : \H \rightrightarrows \H$ defined by
\begin{equation}\label{eqdefdef}
\prox^f_g := (\I +\partial g \circ \prox_f)^{-1}. 
\end{equation}
\end{definition}

Note that $\prox^f_g$ can be seen as a generalization of $\prox_g$ since~$\prox^c_g = \prox_g$ for all constant $c \in \R$. 

\begin{example}\label{ex1}
Let us assume that $\H = \R$. We consider $f = \iota_{ [-1,1] }$ and $g(x) = \vert x \vert$ for all $x \in \R$. In that case we obtain that $\partial g \circ \prox_f = \partial g $ and thus $\prox^f_g = \prox_g$.
\end{example}

Example~\ref{ex1} provides a simple situation where $\prox^f_g = \prox_g$ while $f$ is not constant. We provide in Propositions~\ref{prop3} and \ref{prop4} some general sufficient (and necessary) conditions under which $\prox^f_g = \prox_g$. 

\begin{example}\label{ex2}
Let us assume that $\H = \R$. We consider $f = \iota_{\{ 0 \}}$ and $g(x) = \vert x \vert$ for all $x \in \R$. In that case we obtain that $\partial g \circ \prox_f (x) = [-1,1]$ for all $x \in \R$. As a consequence $\prox^f_g (x) = [x-1,x+1]$ for all $x \in \R$. See Figure~\ref{fig1} for graphical representations of $\prox_g$ and $\prox^f_g$ in that case.
\begin{figure}[h]
\begin{center}
\begin{tikzpicture}
\draw[fill=gray, fill opacity=0.2] (-3,-2) -- (2,3) -- (3,3) -- (3,2) -- (-2,-3) -- (-3,-3) -- cycle;
\draw[dashed] (-3,-3) grid (3,3);
\draw[->] (-3,0) -- (3,0);
\draw[->] (0,-3) -- (0,3);
\node at (-0.2,-0.3) {$0$};
\draw[color=black] (-3,-2) -- (2,3); 
\draw[color=black] (-2,-3) -- (3,2); 
\draw[line width=0.1cm, color=black] (-3,-2) -- (-1,0) -- (1,0)  -- (3,2);

\node at (2.5,0.75) {$\prox_g$};
\node at (1.5,1.5) {$\prox^f_g$};

\end{tikzpicture}
\caption{Example~\ref{ex2}, graph of $\prox_g$ in bold line, and graph of $\prox^f_g$ in gray.}\label{fig1}
\end{center}
\end{figure}
\end{example}

\begin{example}\label{ex2bis}
Let us assume that $\H = \R$. We consider $f(x) = g(x) = \vert x \vert$ for all $x \in \R$. In that case we obtain that $\partial g \circ \prox_f (x) = -1$ for all $x < -1$, $\partial g \circ \prox_f (x) = [-1,1]$ for all $x \in [-1,1]$ and $\partial g \circ \prox_f (x) = 1$ for all $x > 1$. As a consequence $\prox^f_g (x) = x+1$ for all $x \leq -2$, $\prox^f_g (x) = [-1,x+1]$ for all $x \in [-2,0]$, $\prox^f_g (x) = [x-1,1]$ for all $x \in [0,2]$ and $\prox^f_g (x) = x-1$ for all $x \geq 2$. See Figure~\ref{fig2} for graphical representations of $\prox_g$ and $\prox^f_g$ in that case.
\begin{figure}[h]
\begin{center}
\begin{tikzpicture}
\draw[fill=gray, fill opacity=0.2] (-2,-1) -- (0,1) -- (2,1) -- (0,-1) -- cycle;
\draw[dashed] (-3,-3) grid (3,3);
\draw[->] (-3,0) -- (3,0);
\draw[->] (0,-3) -- (0,3);
\node at (-0.2,-0.3) {$0$};
\draw[line width=0.1cm, color=black] (-3,-2) -- (-1,0) -- (1,0)  -- (3,2); 

\node at (2.5,0.75) {$\prox_g$};
\node at (0.5,0.5) {$\prox^f_g$};

\end{tikzpicture}
\caption{Example~\ref{ex2bis}, graph of $\prox_g$ in bold line, and graph of $\prox^f_g$ in gray.}\label{fig2}
\end{center}
\end{figure}
\end{example}

Examples~\ref{ex2} and \ref{ex2bis} provide simple illustrations where $\prox^f_g$ is not single-valued. In particular it follows that $\prox^f_g$ cannot be written as a proximal operator $\prox_\varphi$ for some~$\varphi \in \Gamma_0 (\H)$. We provide in Proposition~\ref{prop2} some sufficient conditions under which $\prox^f_g$ is single-valued. Moreover, Examples~\ref{ex2} and \ref{ex2bis} provide simple situations where $\partial g \circ \prox_f$ is not a monotone operator. As a consequence, it may be possible that $\D (\prox^f_g) \varsubsetneq \H$. In the next proposition, a necessary and sufficient condition under which $\D (\prox^f_g) = \H$ is derived.

\begin{proposition}\label{prop1}
Let $f$, $g \in \Gamma_0 (\H)$ such that $\dom (f) \cap \dom (g) \neq \emptyset$. It holds that $ \D ( \prox^f_g ) = \H $ if and only if the additivity condition
\begin{equation}\label{eqcondition1}\tag{$\mathrm{C}_1$}
\partial (f+g) = \partial f + \partial g ,
\end{equation}
is satisfied.
\end{proposition}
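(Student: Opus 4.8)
The plan is to unwind the definition $\prox^f_g = (\I + \partial g \circ \prox_f)^{-1}$ and express the condition $\D(\prox^f_g) = \H$ as a surjectivity statement, then relate that surjectivity to the additivity condition~\eqref{eqcondition1} via the known characterization of $\prox_{f+g}$. First I would observe that, by the definition of the domain of an inverse set-valued map, $\D(\prox^f_g) = \RR(\I + \partial g \circ \prox_f)$; so $\D(\prox^f_g) = \H$ if and only if for every $x \in \H$ there exists $y \in \H$ with $x \in y + \partial g(\prox_f(y))$, i.e. $x - y \in \partial g(\prox_f(y))$.

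Next I would bring in the proximal characterization of $\prox_f$, namely $z = \prox_f(y) \iff y - z \in \partial f(z)$, i.e. $y \in z + \partial f(z)$. The idea is to show that the system ``$y - z \in \partial f(z)$ and $x - y \in \partial g(z)$ for some $y$'' (with $z = \prox_f(y)$) is, after eliminating $y$ by adding the two inclusions, equivalent to $x - z \in \partial f(z) + \partial g(z)$. Indeed, given such a $y$, summing gives $x - z \in \partial f(z) + \partial g(z)$; conversely, if $x - z \in \partial f(z) + \partial g(z)$, write $x - z = a + b$ with $a \in \partial f(z)$, $b \in \partial g(z)$, and set $y := z + a$; then $y - z = a \in \partial f(z)$ forces $z = \prox_f(y)$, and $x - y = x - z - a = b \in \partial g(z) = \partial g(\prox_f(y))$, so $y \in \prox^f_g{}^{-1}\!$-preimage, i.e. $x \in \D(\prox^f_g)$ is witnessed. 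Hence $x \in \D(\prox^f_g)$ if and only if there exists $z \in \H$ with $x - z \in (\partial f + \partial g)(z)$, that is, $x \in \RR(\I + \partial f + \partial g)$. Therefore $\D(\prox^f_g) = \H \iff \RR(\I + \partial f + \partial g) = \H$.

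It then remains to compare this with the additivity condition. On one hand, $\partial(f+g) \supseteq \partial f + \partial g$ always holds, and $\RR(\I + \partial(f+g)) = \D(\prox_{f+g}) = \H$ since $\prox_{f+g}$ is everywhere defined (as $f+g \in \Gamma_0(\H)$, which uses $\dom(f) \cap \dom(g) \neq \emptyset$). So if \eqref{eqcondition1} holds, then $\RR(\I + \partial f + \partial g) = \RR(\I + \partial(f+g)) = \H$, giving $\D(\prox^f_g) = \H$. Conversely, suppose $\RR(\I + \partial f + \partial g) = \H$; I want $\partial(f+g) = \partial f + \partial g$. Take $x \in \H$ and $w \in \partial(f+g)(x)$; then $x + w \in \RR(\I + \partial f + \partial g)$, so there is $z$ with $x + w - z \in \partial f(z) + \partial g(z) \subseteq \partial(f+g)(z)$, i.e. $x + w \in z + \partial(f+g)(z) = (\I + \partial(f+g))(z)$. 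But also $x + w \in (\I + \partial(f+g))(x)$, and $\I + \partial(f+g)$ is injective (its inverse $\prox_{f+g}$ is single-valued, equivalently $\partial(f+g)$ is monotone), so $z = x$; hence $w = (x+w) - x \in \partial f(x) + \partial g(x)$. This proves $\partial(f+g)(x) \subseteq \partial f(x) + \partial g(x)$ for all $x$, and the reverse inclusion is automatic, yielding \eqref{eqcondition1}.

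The main obstacle I anticipate is purely bookkeeping: carefully tracking the elimination/reconstruction of the auxiliary variable $y$ through the two characterizations (of $\prox_f$ via $\partial f$ and of $\prox^f_g$ via $\partial g \circ \prox_f$) so that no hypothesis is silently used, and making sure the injectivity of $\I + \partial(f+g)$ — equivalently single-valuedness of $\prox_{f+g}$, which is where monotonicity of $\partial(f+g)$ enters — is invoked at the right spot in the converse direction. No deep tool beyond the monotonicity of subdifferentials and the standard resolvent identity $\prox_g = (\I + \partial g)^{-1}$ is needed.
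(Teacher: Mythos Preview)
Your proof is correct and follows essentially the same approach as the paper's: both directions hinge on the same construction $y = z + a$ with $a \in \partial f(z)$ to pass between $\prox^f_g$ and $\partial f + \partial g$, and on the single-valuedness of $\prox_{f+g}$ (injectivity of $\I + \partial(f+g)$) for the converse. The only difference is organizational: you first isolate the pointwise identity $\D(\prox^f_g) = \RR(\I + \partial f + \partial g)$ and then argue the equivalence $\RR(\I + \partial f + \partial g) = \H \iff \eqref{eqcondition1}$, whereas the paper runs the two implications directly without naming the intermediate range condition.
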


\begin{proof}
We first assume that $\partial (f+g) = \partial f + \partial g$. Let $x \in \H$. Defining $w = \prox_{f+g} (x) \in \H$, we obtain that $x \in w + \partial (f+g)(w) = w + \partial f (w) + \partial g (w)$. Thus, there exist $w_f \in \partial f (w)$ and~$w_g \in \partial g(w)$ such that $x = w +w_f +w_g$. We define~$y = w + w_f \in w + \partial f (w)$. In particular we have $w = \prox_f (y)$. Moreover we obtain $x=y+w_g \in y + \partial g (w) = y + \partial g ( \prox_f (y) )$. We conclude that $y \in \prox^f_g (x)$. 

\medskip

Without any additional assumption and directly from the definition of the subdifferential, one can easily see that the inclusion $\partial f (w) + \partial g (w) \subset \partial (f+g) (w)$ is always satisfied for every $w \in \H$. Now~let us assume that $ \D ( \prox^f_g ) = \H $. Let $w \in \H$ and let~$z \in \partial (f+g) (w)$. We consider $x = w + z \in w + \partial (f+g) (w)$. In particular it holds that $w = \prox_{f+g} (x)$. Since $\D (\prox^f_g) = \H$, there exists $y \in \prox^f_g(x)$ and thus it holds that $x \in y + \partial g ( \prox_f (y))$. Moreover, since $y \in \prox_f (y) + \partial f ( \prox_f (y))$, we get that $x \in \prox_f (y) + \partial f ( \prox_f (y)) + \partial g ( \prox_f (y)) \subset \prox_f (y) + \partial (f+g) ( \prox_f (y))$. Thus it holds that $\prox_f (y) = \prox_{f+g} (x) = w$. Moreover, since $x \in \prox_f (y) + \partial f ( \prox_f (y)) + \partial g ( \prox_f (y))$, we obtain that $x \in w + \partial f ( w ) + \partial g ( w )$. We have proved that~$z = x- w \in \partial f ( w ) + \partial g ( w )$. This concludes the proof.
\end{proof}

In most of the present paper, we will assume that Condition~\eqref{eqcondition1} is satisfied. It is not our aim here to discuss the weakest qualification condition ensuring that condition. {A wide literature already deals with this topic (see, e.g., \cite{AttouchBrezis,EkelandTemam,Rock3}).} However, we recall in the following remark the classical sufficient condition of Moreau-Rockafellar under which Condition~\eqref{eqcondition1} holds true {(see, e.g., \cite[Corollary 16.48 p.277]{BauCom17})}, and we provide a simple example where Condition~\eqref{eqcondition1} does not hold and $\D(\prox^f_g) \varsubsetneq \H$.

\begin{remark}[Moreau-Rockafellar theorem]\label{remdsum}
Let $f$, $g \in \Gamma_0 (\H)$ such that $\dom (f) \cap \int (\dom (g)) \neq \emptyset$. Then~$\partial (f+g) = \partial f + \partial g$.
\end{remark}


\begin{example}\label{ex3}
Let us assume that $\H = \R$. We consider $f = \iota_{ \R^- }$ and $g(x) =\iota_{\R^+} (x) -\sqrt{x} $ for all $x \in \R$. In that case, one can easily check that $\dom (f) \cap \dom (g) = \{ 0 \} \neq \emptyset$, $\partial f (0) + \partial g (0) = \emptyset \varsubsetneq \R = \partial (f+g) (0)$ and~$\D(\prox^f_g) = \emptyset \varsubsetneq \H$.
\end{example}

We are now in position to state and prove the main result of the present paper.

\begin{theorem}\label{thm1}
Let $f$, $g \in \Gamma_0 (\H)$ such that $\dom (f) \cap \dom (g) \neq \emptyset$. If $\partial (f+g) = \partial f + \partial g$, then the decomposition formula
\begin{equation}\label{eqdecomp}
\prox_{f+g} = \prox_f \circ \prox^f_g
\end{equation}
holds true. In other words, for every $x \in \H$, we have $ \prox_{f+g} (x) =\prox_f(z) $ for all $z \in \prox^f_g (x)$.
\end{theorem}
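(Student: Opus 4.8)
The plan is to prove the decomposition formula by working directly from the definitions and unwinding the characterization of $\prox^f_g$ via its defining inverse relation. Fix $x \in \H$ and let $z \in \prox^f_g(x)$; I want to show $\prox_f(z) = \prox_{f+g}(x)$. By Definition~\ref{def1}, $z \in \prox^f_g(x) = (\I + \partial g \circ \prox_f)^{-1}(x)$ means precisely that $x \in z + \partial g(\prox_f(z))$, i.e. there exists $w_g \in \partial g(\prox_f(z))$ with $x = z + w_g$. Set $w := \prox_f(z)$; by definition of $\prox_f = (\I + \partial f)^{-1}$, this says $z \in w + \partial f(w)$, so there exists $w_f \in \partial f(w)$ with $z = w + w_f$.

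Combining the two relations, $x = z + w_g = w + w_f + w_g \in w + \partial f(w) + \partial g(w)$. Now invoke the additivity hypothesis $\partial(f+g) = \partial f + \partial g$: this gives $x \in w + \partial(f+g)(w)$, which is exactly the statement $w = \prox_{f+g}(x)$ (recalling $\prox_{f+g} = (\I + \partial(f+g))^{-1}$ and that this operator is single-valued and defined everywhere). Hence $\prox_f(z) = w = \prox_{f+g}(x)$, which is the claimed identity; since this holds for every $z \in \prox^f_g(x)$, we are done. Note that the hypothesis $\dom(f) \cap \dom(g) \neq \emptyset$ ensures $f+g \in \Gamma_0(\H)$ so that $\prox_{f+g}$ is well-defined, and — via Proposition~\ref{prop1} — that $\D(\prox^f_g) = \H$, so the formula~\eqref{eqdecomp} is a genuine equality of everywhere-defined operators rather than a vacuous inclusion.

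I do not expect a serious obstacle here: the argument is essentially the same bookkeeping already carried out in the proof of Proposition~\ref{prop1} (the ``first implication'' there constructs $y \in \prox^f_g(x)$ from $w = \prox_{f+g}(x)$, and the present proof runs the analogous computation in both directions). The only point requiring a little care is the \emph{direction} in which additivity is used. One inclusion, $\partial f(w) + \partial g(w) \subset \partial(f+g)(w)$, is automatic and suffices to go from ``$z \in \prox^f_g(x)$'' to ``$\prox_f(z) = \prox_{f+g}(x)$''. Strictly speaking, for the bare identity $\prox_f(\prox^f_g(x)) \ni \prox_{f+g}(x)$ whenever the left side is nonempty, one does not even need the full additivity condition — but without it $\D(\prox^f_g)$ may be a proper subset of $\H$ (Example~\ref{ex3}), so the clean statement ``$\prox_{f+g} = \prox_f \circ \prox^f_g$'' as an equality on all of $\H$ does rely on Proposition~\ref{prop1}. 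If desired, one can also phrase the whole argument symmetrically by composing the two inverse relations $z \in (\I+\partial g\circ\prox_f)^{-1}(x)$ and $w \in (\I+\partial f)^{-1}(z)$ and checking that the composite relation coincides, under~\eqref{eqcondition1}, with $w \in (\I + \partial(f+g))^{-1}(x)$.
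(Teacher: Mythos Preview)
Your proof is correct and in fact takes a cleaner route than the paper's. The paper's argument proceeds in two steps: it first invokes the construction from the proof of Proposition~\ref{prop1} to exhibit a \emph{particular} $y \in \prox^f_g(x)$ satisfying $\prox_f(y) = \prox_{f+g}(x)$, and then, given an arbitrary $z \in \prox^f_g(x)$, uses the monotonicity of $\partial g$ together with the cocoercivity (firm nonexpansiveness) of $\prox_f$ to force $\prox_f(z) = \prox_f(y)$. Your argument bypasses this detour entirely: by unwinding the two inverse relations $z \in (\I + \partial g \circ \prox_f)^{-1}(x)$ and $w \in (\I + \partial f)^{-1}(z)$ you land directly on $x \in w + \partial f(w) + \partial g(w) \subset w + \partial(f+g)(w)$, which pins down $w = \prox_{f+g}(x)$ for \emph{every} $z$ at once. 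As you correctly observe, this direction needs only the trivial inclusion $\partial f + \partial g \subset \partial(f+g)$; the full additivity hypothesis enters solely through Proposition~\ref{prop1} to guarantee $\D(\prox^f_g) = \H$. The paper's monotonicity--cocoercivity step is thus unnecessary for the theorem as stated, though it does yield the side fact ``$\prox_f$ is constant on each fiber $\prox^f_g(x)$'' by a purely operator-theoretic argument independent of identifying the common value.
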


\begin{proof}
Let $x \in \H$ and let $y \in \prox^f_g (x)$ constructed as in the first part of the proof of Proposition~\ref{prop1}. In particular it holds that $\prox_f (y) = \prox_{f+g}(x)$. Let $z \in \prox^f_g (x)$. We know that $x-y \in \partial g (\prox_f (y))$ and $x-z \in \partial g ( \prox_f (z))$. Since $\partial g$ is a monotone operator, we obtain that 
$$ \langle (x-y)-(x-z) , \prox_f (y) - \prox_f (z) \rangle \geq 0 .$$
From the cocoercivity (see for instance \cite[Definition 4.10 p.72]{BauCom17}) of the proximal operator, we obtain that 
$$ 0 \geq \langle y-z , \prox_f (y) - \prox_f (z) \rangle \geq \Vert \prox_f (y) - \prox_f (z) \Vert^2 \geq 0 .$$
We deduce that $\prox_f (z) = \prox_f(y) = \prox_{f+g}(x)$. The proof is complete.
\end{proof}

\begin{remark}
Let $f$, $g \in \Gamma_0 (\H)$ with $\dom (f) \cap \dom (g) \neq \emptyset$ and such that $\partial (f+g) = \partial f + \partial g$ and let $x \in \H$. Theorem~\ref{thm1} states that, even if $\prox^f_g(x)$ is not a singleton, all elements of $\prox^f_g (x)$ has the same value through the proximal operator $\prox_f$, and this value is equal to~$\prox_{f+g}(x)$. 
\end{remark}

\begin{remark}
Let $f$, $g \in \Gamma_0 (\H)$ with $\dom (f) \cap \dom (g) \neq \emptyset$. Note that the additivity condition $\partial (f+g) = \partial f + \partial g$ is not only sufficient, but also necessary for the validity of the equality $\prox_{f+g} = \prox_f \circ \prox^f_g$. Indeed, from Proposition~\ref{prop1}, if $\partial f + \partial g \varsubsetneq \partial (f+g)$, then there exists $x \in \H$ such that $\prox^f_g (x) = \emptyset$ and thus $\prox_{f+g} (x) \neq \prox_f \circ \prox^f_g (x)$.
\end{remark}

\begin{remark}\label{remRR}
Let $f$, $g \in \Gamma_0 (\H)$ with $\dom (f) \cap \dom (g) \neq \emptyset$ and such that $\partial (f+g) = \partial f + \partial g$. From Theorem~\ref{thm1}, we deduce that $\RR ( \prox_{f+g} ) \subset \RR (\prox_f) \cap \RR ( \prox_g )$. If the additivity condition $\partial (f+g) = \partial f + \partial g$ is not satisfied, this remark does not hold true anymore. Indeed, with the framework of Example~\ref{ex3}, we have $\RR (\prox_{f+g} ) = \{ 0 \}$ while $0 \notin \RR (\prox_g)$.
\end{remark}

\begin{example}\label{ex9809}
Following the idea of Y.-L.~Yu in \cite[Example~2]{Yu13}, let us consider $\H = \R$ and $f(x) = \frac{1}{2}x^2$ for all $x \in \R$.  Since $\prox_{\gamma f} = \frac{1}{1+\gamma} \I$ for all $\gamma \geq 0$ and $\prox^f_f = \frac{2}{3} \I$, we retrieve that
$$ \dfrac{1}{3} \I  =  \prox_{2f} =  \prox_{f+f} = \prox_f \circ \prox^f_f = \dfrac{1}{3} \I \neq \dfrac{1}{4} \I = \prox_f \circ \prox_f, $$
which illustrates Theorem~\ref{thm1}.
\end{example}

\subsection{Properties}

Let $f$, $g \in \Gamma_0 (\H)$. We know that $\prox^f_g$ is a generalization of $\prox_g$ in the sense that $\prox^f_g = \prox_g$ if $f$ is constant for instance. In the next proposition, our aim is to provide more general sufficient (and necessary) conditions under which $\prox^f_g = \prox_g$. We will base our discussion on the following conditions:
\begin{equation}\label{eqcondition2}\tag{$\mathrm{C}_2$}
\forall x \in \H, \quad \partial g(x) \subset \partial g (\prox_f (x) ),
\end{equation}
\begin{equation}\label{eqcondition3}\tag{$\mathrm{C}_3$}
\forall x \in \H , \quad \partial g( \prox_f (x) ) \subset \partial g ( x ).
\end{equation}
Note that Condition~\eqref{eqcondition2} has been introduced by Y.-L. Yu in \cite{Yu13} as a sufficient condition under which $\prox_{f+g} = \prox_f \circ \prox_g$.

\begin{proposition}\label{prop3}
Let $f$, $g \in \Gamma_0 (\H)$ with $\dom (f) \cap \dom (g) \neq \emptyset$.
\begin{enumerate}
\item[\rm{(i)}] If Condition~\eqref{eqcondition2} is satisfied, then $\prox_g (x) \in \prox^f_g (x)$ for all $x \in \H$.
\item[\rm{(ii)}] If Conditions~\eqref{eqcondition1} and \eqref{eqcondition3} are satisfied, then $\prox^f_g (x) = \prox_g (x)$ for all $x \in \H$.
\end{enumerate}
In both cases, Condition~\eqref{eqcondition1} is satisfied and the equality $\prox_{f+g} = \prox_f \circ \prox_g$ holds true.
\end{proposition}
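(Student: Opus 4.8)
The plan is to reduce everything to the definitions $\prox_g = (\I + \partial g)^{-1}$ and $\prox^f_g = (\I + \partial g \circ \prox_f)^{-1}$, together with Proposition~\ref{prop1} and Theorem~\ref{thm1}; no genuine computation is involved, the content being essentially a bookkeeping of subdifferential inclusions applied at the right base points.

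For item (i), I would fix $x \in \H$, set $p := \prox_g(x)$, and unravel the definition: $p = (\I + \partial g)^{-1}(x)$ means exactly $x - p \in \partial g(p)$. To get $p \in \prox^f_g(x) = (\I + \partial g \circ \prox_f)^{-1}(x)$ I must check that $x - p \in \partial g(\prox_f(p))$, and this is precisely what Condition~\eqref{eqcondition2}, applied at the point $p$, delivers from $x - p \in \partial g(p)$. Since this works for every $x \in \H$ and $\prox_g$ is defined everywhere, $\prox^f_g$ has full domain, so Proposition~\ref{prop1} yields Condition~\eqref{eqcondition1}. Then Theorem~\ref{thm1}, applied with $z := p = \prox_g(x) \in \prox^f_g(x)$, gives $\prox_{f+g}(x) = \prox_f(z) = \prox_f(\prox_g(x))$; note that this uses Theorem~\ref{thm1} in the direction that \emph{every} element of $\prox^f_g(x)$ has the same prox value, so it does not matter that $\prox^f_g(x)$ may be strictly larger than $\{\prox_g(x)\}$.

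For item (ii), I would establish the reverse inclusion under Conditions~\eqref{eqcondition1} and~\eqref{eqcondition3}. First, Proposition~\ref{prop1} and Condition~\eqref{eqcondition1} give $\D(\prox^f_g) = \H$, so $\prox^f_g(x) \neq \emptyset$ for every $x$. Now pick any $y \in \prox^f_g(x)$; then $x - y \in \partial g(\prox_f(y))$, and Condition~\eqref{eqcondition3} applied at the point $y$ upgrades this to $x - y \in \partial g(y)$, i.e. $y = \prox_g(x)$. Hence $\prox^f_g(x) \subset \{\prox_g(x)\}$, and combining with nonemptiness we conclude $\prox^f_g(x) = \{\prox_g(x)\}$; in particular $\prox^f_g$ is single-valued and coincides with $\prox_g$. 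The identity $\prox_{f+g} = \prox_f \circ \prox_g$ then follows once more from Theorem~\ref{thm1} (applicable since Condition~\eqref{eqcondition1} holds).

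I do not expect a real obstacle here — the statement is soft once Proposition~\ref{prop1} and Theorem~\ref{thm1} are available. The only points requiring care are: applying Conditions~\eqref{eqcondition2} and~\eqref{eqcondition3} at the correct base point (the candidate value $p = \prox_g(x)$ in (i), an arbitrary $y \in \prox^f_g(x)$ in (ii), \emph{not} at $x$), and, in part (ii), invoking Proposition~\ref{prop1} to secure nonemptiness of $\prox^f_g(x)$ before asserting equality of the two a priori set-valued operators. Everything else is a direct translation through the two defining formulas.
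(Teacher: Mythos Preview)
Your proposal is correct and follows essentially the same approach as the paper's own proof: unravel the resolvent definitions, apply \eqref{eqcondition2} (resp.\ \eqref{eqcondition3}) at the appropriate base point, invoke Proposition~\ref{prop1} for the domain/Condition~\eqref{eqcondition1} equivalence, and conclude with Theorem~\ref{thm1}. The paper's argument is just a slightly more compressed version of what you wrote.
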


\begin{proof}
Let $x \in \H$. If Condition~\eqref{eqcondition2} is satisfied, considering $y = \prox_g (x)$, we get that~$x \in y + \partial g(y) \subset y + \partial g (\prox_f (y))$ and thus $y \in \prox^f_g (x)$. In particular, it holds that $\D (\prox^f_g )=\H$ and thus Condition~\eqref{eqcondition1} is satisfied from Proposition~\ref{prop1}. Secondly, if Conditions~\eqref{eqcondition1} and~\eqref{eqcondition3} are satisfied, then~$\D ( \prox^f_g) = \H$ from Proposition~\ref{prop1}. Considering $y \in \prox^f_g (x)$, we get that $x \in  y + \partial g (\prox_f (y)) \subset y + \partial g(y)$ and thus $y = \prox_g (x)$. The last assertion of Proposition~\ref{prop3} directly follows from Theorem~\ref{thm1}.
\end{proof}

In the first item of Proposition~\ref{prop3} and if $\prox^f_g$ is set-valued, we are in the situation where $\prox_g$ is a selection of $\prox^f_g$. Proposition~\ref{prop4} specifies this selection in the case where $\partial (f+g) = \partial f + \partial g$.

\begin{lemma}\label{lem1}
Let $f$, $g \in \Gamma_0 (\H)$ with $\dom (f) \cap \dom (g) \neq \emptyset$. Then $\prox^f_g (x)$ is a nonempty closed and convex subset of~$\H$ for all $x \in \D (\prox^f_g)$.
\end{lemma}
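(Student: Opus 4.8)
The plan is to reduce $\prox^f_g(x)$ to an intersection of two manifestly closed and convex sets. Nonemptiness requires no argument: by the very definition of the domain, $\prox^f_g(x) \neq \emptyset$ for every $x \in \D(\prox^f_g)$, so only closedness and convexity are at stake. Throughout, fix $x \in \D(\prox^f_g)$.

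The first step is to isolate the auxiliary fact that \emph{$\prox_f$ is constant on $\prox^f_g(x)$}, which is exactly the computation already performed in the proof of Theorem~\ref{thm1}: if $y$, $z \in \prox^f_g(x)$, then $x-y \in \partial g(\prox_f(y))$ and $x-z \in \partial g(\prox_f(z))$; monotonicity of $\partial g$ gives $\langle (x-y)-(x-z), \prox_f(y)-\prox_f(z) \rangle \geq 0$, while the firm nonexpansiveness (cocoercivity) of $\prox_f$ gives $\langle y-z, \prox_f(y)-\prox_f(z) \rangle \geq \Vert \prox_f(y)-\prox_f(z) \Vert^2$, and adding the two inequalities forces $\prox_f(y) = \prox_f(z)$. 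Pick any $y_0 \in \prox^f_g(x)$ and set $p := \prox_f(y_0) \in \H$.

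The second step rewrites the membership condition in terms of $p$. By definition, $y \in \prox^f_g(x)$ if and only if $x-y \in \partial g(\prox_f(y))$. By the previous step, every such $y$ satisfies $\prox_f(y)=p$; conversely, if $\prox_f(y)=p$ and $x-y \in \partial g(p)$, then $x-y \in \partial g(\prox_f(y))$, so $y \in \prox^f_g(x)$. Hence
\begin{equation*}
\prox^f_g(x) = \{ y \in \H \mid \prox_f(y) = p \} \cap \{ y \in \H \mid x - y \in \partial g(p) \} = \big( p + \partial f(p) \big) \cap \big( x - \partial g(p) \big),
\end{equation*}
where the identity $\{ y \mid \prox_f(y)=p \} = (\I + \partial f)(p) = p + \partial f(p)$ comes from $\prox_f = (\I+\partial f)^{-1}$. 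Since the subdifferential at a point of a function in $\Gamma_0(\H)$ is a closed convex set (an intersection of closed half-spaces), both $p+\partial f(p)$ and $x-\partial g(p)$ are closed and convex, hence so is their intersection $\prox^f_g(x)$.

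The argument is essentially bookkeeping once the constancy of $\prox_f$ on $\prox^f_g(x)$ is extracted; the only point requiring care is precisely that this observation is what allows the $y$-dependent set $\partial g(\prox_f(y))$ occurring in Definition~\ref{def1} to be replaced by the fixed closed convex set $\partial g(p)$, which is what makes the reduction to an intersection of two closed convex sets legitimate. I do not anticipate any genuine obstacle.
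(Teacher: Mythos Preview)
Your proof is correct and takes a genuinely different route from the paper's. The paper defers the proof of Lemma~\ref{lem1} until after Proposition~\ref{prop5}, and then simply invokes the characterization
\[
\prox^f_g(x) = \argmin\,(\M_{f^\ast} + g^\ast \circ L_x),
\]
observing that the argmin of a function in $\Gamma_0(\H)$ is closed and convex. Your argument is more elementary and self-contained: you extract from the proof of Theorem~\ref{thm1} the fact that $\prox_f$ is constant on $\prox^f_g(x)$ (an observation that, as you correctly use, does not require the additivity condition~\eqref{eqcondition1}), and then identify $\prox^f_g(x)$ explicitly as the intersection $(p+\partial f(p)) \cap (x-\partial g(p))$ of two closed convex sets. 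The paper's route is shorter once Proposition~\ref{prop5} is in place but requires the machinery of conjugates and Moreau envelopes; yours avoids that machinery entirely and, as a bonus, yields a concrete description of $\prox^f_g(x)$ in terms of the subdifferentials of $f$ and $g$ at a single point.
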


\begin{proof}
The proof of Lemma~\ref{lem1} is provided after the proof of Proposition~\ref{prop5} (required). 
\end{proof}

\begin{proposition}\label{prop4}
Let $f$, $g \in \Gamma_0 (\H)$ with $\dom (f) \cap \dom (g) \neq \emptyset$ and such that $\partial (f+g) = \partial f + \partial g$ and let $x \in \H$. If~$\prox_g (x) \in \prox^f_g (x)$, then
$$ \prox_g (x) = \proj_{\prox^f_g (x)} ( \prox_{f+g} (x) ). $$
\end{proposition}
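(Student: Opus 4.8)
The plan is to exploit the decomposition formula from Theorem~\ref{thm1} together with the variational characterization of the projection onto the closed convex set $\prox^f_g(x)$ (which is indeed closed and convex by Lemma~\ref{lem1}). Write $p := \prox_g(x)$ and $q := \prox_{f+g}(x)$. By hypothesis $p \in \prox^f_g(x)$, so it makes sense to ask whether $p = \proj_{\prox^f_g(x)}(q)$. The projection characterization tells us this is equivalent to showing that $p \in \prox^f_g(x)$ (already known) and that
$$ \langle q - p , z - p \rangle \leq 0 \qquad \text{for all } z \in \prox^f_g(x). $$
So the whole proof reduces to establishing this inequality.

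First I would unpack what membership in $\prox^f_g(x)$ means for $p$ and for an arbitrary $z$: by the definition~\eqref{eqdefdef}, $x - p \in \partial g(\prox_f(p))$ and $x - z \in \partial g(\prox_f(z))$. Since $p = \prox_g(x)$ we have $x - p \in \partial g(p)$, and because $\prox_f$ is firmly nonexpansive one also gets a handle on $\prox_f(p)$; in fact, combining $x-p\in\partial g(p)$ with Condition~\eqref{eqcondition3}-type reasoning is \emph{not} assumed here, so instead I would use Theorem~\ref{thm1} directly: $\prox_f(z) = \prox_f(p) = \prox_{f+g}(x) = q$ for every $z \in \prox^f_g(x)$, because all elements of $\prox^f_g(x)$ have the same image $q$ under $\prox_f$. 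This is the key simplification. Thus $x - z \in \partial g(q)$ for every $z \in \prox^f_g(x)$, and in particular $x - p \in \partial g(q)$ as well (taking $z = p$).

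Now both $x-p$ and $x-z$ lie in $\partial g(q)$ — the \emph{same} point. Here is where I expect the main (and only real) obstacle: $\partial g(q)$ is merely a convex set, so $x-p$ and $x-z$ need not be equal, and monotonicity of $\partial g$ evaluated at a single point gives nothing. The resolution is to bring in the Moreau identity relating $p = \prox_g(x)$ to the subdifferential of $g^\ast$: $x - p \in \partial g(p)$ means $p \in \partial g^\ast(x-p)$, and more usefully $p = \prox_g(x) = x - \prox_{g^\ast}(x)$. Alternatively, and more cleanly, I would use the characterization $p = \prox_g(x) = \argmin_w\big(g(w) + \tfrac12\|w-x\|^2\big)$ together with $q \in \partial g^\ast(x-z)$ for each $z$. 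Writing $z = x - \xi_z$ with $\xi_z = x - z \in \partial g(q)$, the desired inequality $\langle q - p, z - p\rangle \le 0$ becomes $\langle q - p, \xi_p - \xi_z \rangle \le 0$ where $\xi_p = x-p \in \partial g(p)\cap \partial g(q)$. Since $q \in \partial g^\ast(\xi_z)$ for all relevant $z$ and $p \in \partial g^\ast(\xi_p)$, monotonicity of $\partial g^\ast$ yields $\langle \xi_p - \xi_z, p - q\rangle \ge 0$, i.e. $\langle q - p, \xi_p - \xi_z\rangle \le 0$, which is exactly what we need. Finally I would translate back: $\xi_p - \xi_z = (x-p) - (x-z) = z - p$, so $\langle q-p, z-p\rangle \le 0$ for all $z \in \prox^f_g(x)$, establishing $p = \proj_{\prox^f_g(x)}(q)$ and completing the proof. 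The one point requiring care is verifying that $q \in \partial g^\ast(x-z)$ genuinely holds for \emph{every} $z\in\prox^f_g(x)$ — this is precisely the content of $x-z\in\partial g(q)$ obtained above via Theorem~\ref{thm1}, so monotonicity of $\partial g^\ast$ applies uniformly.
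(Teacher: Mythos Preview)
Your proof is correct and uses the same core ingredients as the paper: Lemma~\ref{lem1} for closedness and convexity of $\prox^f_g(x)$, Theorem~\ref{thm1} to identify $\prox_f(z) = q$ for every $z \in \prox^f_g(x)$, and monotonicity of a subdifferential to obtain the projection inequality. The only difference is that you pass to the conjugate $\partial g^\ast$, whereas the paper works with $\partial g$ directly: from $x - p \in \partial g(p)$ (since $p = \prox_g(x)$) and $x - z \in \partial g(q)$ (since $z \in \prox^f_g(x)$ and $\prox_f(z) = q$), monotonicity of $\partial g$ at the \emph{two distinct} points $p$ and $q$ immediately gives $\langle (x-p)-(x-z),\, p - q \rangle \geq 0$, i.e.\ $\langle q - p,\, z - p \rangle \leq 0$. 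The ``obstacle'' you describe---that $x-p$ and $x-z$ both lie in $\partial g(q)$---is therefore artificial: you already had $x-p \in \partial g(p)$ from the prox characterization, and using that pairing avoids the detour through $\partial g^\ast$. Your dualized argument is of course equivalent (it is literally the same inequality read through Fenchel inversion), but the paper's phrasing is one step shorter.
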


\begin{proof}
If $\prox_g (x) \in \prox^f_g (x)$, then $x \in \D (\prox^f_g)$ and thus $\prox^f_g (x)$ is a nonempty closed and convex subset of~$\H$ from Lemma~\ref{lem1}. Let $z \in \prox^f_g (x)$. In particular we have $\prox_f (z) = \prox_{f+g}(x)$ from Theorem~\ref{thm1}. Using the fact that $x-\prox_g(x) \in \partial g (\prox_g (x))$ and $x-z \in \partial g (\prox_f (z)) = \partial g ( \prox_{f+g} (x))$ together with the monotonicity of $\partial g$, we obtain that
\begin{equation*}
\langle \prox_{f+g} (x) - \prox_g (x) , z - \prox_g (x) \rangle 
= \langle \prox_{f+g} (x) - \prox_g (x) , (x- \prox_g (x) ) - (x-z) \rangle \leq 0 .
\end{equation*}
Since $\prox_g (x) \in \prox^f_g (x)$, we conclude the proof from the characterization of $\proj_{\prox^f_g (x)}$.
\end{proof}

\begin{remark}\label{remMNE}
Let $f = \iota_{\{ \omega \}}$ with $\omega \in \H$ and let $g \in \Gamma_0 (\H)$ such that $\omega \in \int (\dom ( g ))$. Hence the additivity condition $\partial (f+g) = \partial f + \partial g$ is satisfied from Remark~\ref{remdsum}. From Remark~\ref{remRR} and since $\prox_f =\proj_{ \{ \omega \}}$, we easily deduce that $\RR (\prox_{f+g}) = \{ \omega \}$. Let~$x \in \H$ such that $\prox_g (x) \in \prox^f_g (x)$. From Proposition~\ref{prop4} we get that
$$ \prox_g (x) = \proj_{\prox^f_g (x)} ( \omega ). $$
If moreover $\omega = 0$, we deduce that $\prox_g (x)$ is the particular selection that corresponds to the element of minimal norm in $\prox^f_g (x)$ (also known as the \textit{lazy selection}). The following example is in this sense.
\end{remark}

\begin{example}\label{ex4}
Let us consider the framework of Example~\ref{ex2}. In that case, Conditions~\eqref{eqcondition1} and~\eqref{eqcondition2} are satisfied. We deduce from Proposition~\ref{prop3} that $\prox_g(x) \in \prox^f_g(x)$ for all $x \in \R$. From Remark~\ref{remMNE}, we conclude that $\prox_g (x)$ is exactly the element of minimal norm in $\prox^f_g (x)$ for all $x \in \R$. This result is clearly illustrated by the graphs of $\prox_g$ and $\prox^f_g$ provided in Figure~\ref{fig1}.
\end{example}

Let $f$, $g \in \Gamma_0 (\H)$ with $\dom (f) \cap \dom (g) \neq \emptyset$ and such that $\partial (f+g) = \partial f + \partial g$. From Theorem~\ref{thm1}, one can easily see that, if~$\prox_f$ is injective, then $\prox^f_g$ is single-valued. Since the injection of $\prox_f$ is too restrictive, other sufficient conditions under which $\prox^f_g$ is single-valued are provided from Theorem~\ref{thm1} in the next proposition.

\begin{proposition}\label{prop2}
Let $f$, $g \in \Gamma_0 (\H)$ with $\dom (f) \cap \dom (g) \neq \emptyset$ and such that $\partial (f+g) = \partial f + \partial g$. If either~$\partial f$ or~$\partial g$ is single-valued, then $\prox^f_g$ is single-valued.
\end{proposition}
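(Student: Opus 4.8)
The plan is to treat the two cases separately, in both of them reducing to the decomposition formula of Theorem~\ref{thm1}, which asserts that for every $x \in \H$ and all $y,z \in \prox^f_g(x)$ one has $\prox_f(y) = \prox_f(z) = \prox_{f+g}(x)$. So the whole task amounts to showing that, under the stated hypotheses, the relation $\prox_f(y)=\prox_f(z)$ forces $y=z$ whenever $y,z\in\prox^f_g(x)$.

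First I would handle the case where $\partial f$ is single-valued. Here I would simply establish that $\prox_f$ is injective, which already suffices by the observation made just before the proposition. Indeed, $\prox_f = (\I + \partial f)^{-1}$, so $w = \prox_f(y)$ is equivalent to $y - w \in \partial f(w)$; if $\prox_f(y) = \prox_f(z) = w$, then both $y - w$ and $z - w$ lie in $\partial f(w)$, which is nonempty and hence (being single-valued) a singleton, forcing $y = z$. Combining this with Theorem~\ref{thm1}, any $y, z \in \prox^f_g(x)$ satisfy $\prox_f(y) = \prox_f(z)$, hence $y = z$, i.e.\ $\prox^f_g$ is single-valued.

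Second, the case where $\partial g$ is single-valued. I would fix $x \in \H$ and $y, z \in \prox^f_g(x)$; by Theorem~\ref{thm1} the common value $w := \prox_f(y) = \prox_f(z) = \prox_{f+g}(x)$ is well defined. By definition of $\prox^f_g$ we have $x - y \in \partial g(\prox_f(y)) = \partial g(w)$ and likewise $x - z \in \partial g(w)$. Since $\partial g(w)$ is nonempty and $\partial g$ is single-valued, $\partial g(w)$ is a singleton, so $x - y = x - z$ and therefore $y = z$.

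There is no real obstacle here: the content is already packaged in Theorem~\ref{thm1}, and the two cases are dispatched by, respectively, the injectivity of $\prox_f$ and the fact that a single-valued operator takes only one value at each point of its domain. The only point worth a line of care is that ``single-valued'' refers to points of the domain, so one must first note that $\partial f(w)$ (resp.\ $\partial g(w)$) is nonempty before concluding that it is a singleton --- which is immediate here since it contains $y-w$ (resp.\ $x-y$).
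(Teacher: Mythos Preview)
Your proof is correct and essentially identical to the paper's own argument: both invoke Theorem~\ref{thm1} to obtain $\prox_f(y)=\prox_f(z)=\prox_{f+g}(x)$ and then conclude $y=z$ via, respectively, the relation $y,z\in\prox_{f+g}(x)+\partial f(\prox_{f+g}(x))$ when $\partial f$ is single-valued, and the relation $x-y,x-z\in\partial g(\prox_{f+g}(x))$ when $\partial g$ is single-valued. The only cosmetic difference is that you phrase the first case as ``$\prox_f$ is injective,'' whereas the paper writes the identity $z_i=\prox_{f+g}(x)+\partial f(\prox_{f+g}(x))$ directly.
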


\begin{proof}
Let $x \in \H$ and let $z_1$, $z_2 \in \prox^f_g (x)$. From Theorem~\ref{thm1}, it holds that $\prox_f (z_1) = \prox_{f} (z_2) = \prox_{f+g} (x)$. If the operator $\partial f$ is single-valued, we obtain that $z_1 = \prox_{f+g} (x) + \partial f (\prox_{f+g} (x)) = z_2$. If the operator $\partial g$ is single-valued, we get $x-z_1 = \partial g ( \prox_f (z_1) ) = \partial g ( \prox_f (z_2) ) = x-z_2 $ and thus $z_1 = z_2$.
\end{proof}

\section{Relations with the Douglas-Rachford operator}\label{sec3}
Let $f$, $g \in \Gamma_0 (\H)$. The \textit{Douglas-Rachford operator} $\T_{f,g} : \H \to \H$ associated to $f$ and $g$ is usually defined by
$$ \T_{f,g} (y) :=  y-\prox_f (y) + \prox_g ( 2 \prox_f(y) - y ), $$
for all $y \in \H$. We refer for instance to~\cite[Section 28.3 p.517]{BauCom17} where details can be found on this classical operator.

\medskip

One aim of this section is to study the relations between the $f$-proximal operator~$\prox^f_g$ introduced in this paper and the Douglas-Rachford operator $\T_{f,g}$. For this purpose, we introduce an extension~$\TT_{f,g} : \H \times \H \to \H$ of the classical Douglas-Rachford operator defined by
$$ \TT_{f,g} (x,y) :=  y-\prox_f (y) + \prox_g ( x+\prox_f(y) - y ), $$
for all $x$, $y \in \H$. 

\medskip

Note that $ \T_{f,g} (y) = \TT_{f,g} ( \prox_f (y) , y ) $ for all $ y \in \H$, and that the definition of $\TT_{f,g}$ only depends on the knowledge of $\prox_f$ and $\prox_g$. 

\subsection{Several characterizations of $\prox^f_g$}

Let $f$, $g \in \Gamma_0 (\H)$. In this subsection, our aim is to derive several characterizations of $\prox^f_g$ in terms of solutions of variational inequalities, of minimization problems and of fixed point problems (see Proposition~\ref{prop5}).

\begin{lemma}\label{lem2}
Let $f$, $g \in \Gamma_0 (\H)$. It holds that
$$ \TT_{f,g} (x,\cdot) = \prox_{g^\ast \circ L_x} \circ \prox_{f^\ast} ,$$
for all $x \in \H$.
\end{lemma}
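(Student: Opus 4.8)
The plan is to reduce the identity to two ingredients that are already recalled in Section~\ref{secnot}: Moreau's decomposition $\prox_h + \prox_{h^\ast} = \I$, and the elementary behaviour of a proximal operator under pre-composition with the affine isometric involution $L_x$. First I would rewrite the definition of $\TT_{f,g}$ using Moreau's decomposition applied to $f$: since $\prox_{f^\ast} = \I - \prox_f$, for every $y \in \H$ one has $y - \prox_f(y) = \prox_{f^\ast}(y)$ and $x + \prox_f(y) - y = x - \prox_{f^\ast}(y)$, hence
$$ \TT_{f,g}(x,y) = \prox_{f^\ast}(y) + \prox_g\bigl(x - \prox_{f^\ast}(y)\bigr). $$
Thus it suffices to establish the pointwise identity $\prox_{g^\ast \circ L_x}(w) = w + \prox_g(x - w)$ for every $w \in \H$, and then to substitute $w = \prox_{f^\ast}(y)$.

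To prove that identity, I would first observe that $g^\ast \in \Gamma_0(\H)$ and that $L_x$ is an affine isometric involution, so $g^\ast \circ L_x \in \Gamma_0(\H)$ and $\prox_{g^\ast \circ L_x}$ is well defined and single-valued. Writing the proximal operator via its argmin characterization and performing the change of variable $z = L_x(y) = x - y$ (a bijection of $\H$ onto itself with $\|y - w\| = \|(x-y) - (x-w)\|$), I get
$$ \prox_{g^\ast \circ L_x}(w) = \argmin_{y \in \H}\Bigl( g^\ast(x - y) + \tfrac12\|y - w\|^2 \Bigr) = x - \argmin_{z \in \H}\Bigl( g^\ast(z) + \tfrac12\|z - (x - w)\|^2 \Bigr) = x - \prox_{g^\ast}(x - w). $$
Applying Moreau's decomposition a second time, now to $g$, namely $\prox_{g^\ast}(x - w) = (x - w) - \prox_g(x - w)$, yields $\prox_{g^\ast \circ L_x}(w) = x - (x - w) + \prox_g(x - w) = w + \prox_g(x - w)$, as claimed. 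Combining this with the first paragraph gives $\TT_{f,g}(x,y) = \prox_{g^\ast \circ L_x}\bigl(\prox_{f^\ast}(y)\bigr)$ for all $y \in \H$, that is, $\TT_{f,g}(x,\cdot) = \prox_{g^\ast \circ L_x} \circ \prox_{f^\ast}$.

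I do not expect a genuine obstacle here: the argument is essentially bookkeeping, combining two instances of Moreau's decomposition with one change of variable. The only points deserving a word of care are checking that $g^\ast \circ L_x \in \Gamma_0(\H)$ so that $\prox_{g^\ast \circ L_x}$ makes sense (immediate, since $L_x$ is an affine isometry), and keeping the translation by $x$ consistent through the substitution $z = x - y$; a sign slip there is the most plausible source of error.
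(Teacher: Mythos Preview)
Your argument is correct and follows essentially the same route as the paper: both proofs combine Moreau's decomposition (applied once to $f$ and once to $g$) with the identity $\prox_{g^\ast \circ L_x} = L_x \circ \prox_{g^\ast} \circ L_x$. The only difference is cosmetic: the paper quotes this translation identity from \cite[Proposition~24.8 p.416]{BauCom17}, whereas you derive it yourself via the argmin characterization and the change of variable $z = x - y$.
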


\begin{proof}
Let $x \in \H$. Lemma~\ref{lem2} directly follows from the equality $\prox_{g^\ast \circ L_x} = L_x \circ \prox_{g^\ast} \circ L_x$ (see~\cite[Proposition 24.8 p.416]{BauCom17}) and from Moreau's decompositions.
\end{proof}

\begin{proposition}\label{prop5}
Let $f$, $g \in \Gamma_0 (\H)$. It holds that
$$ \prox^f_g (x) = \Sol_{\VI} ( \prox_f , g^\ast \circ L_x ) = \argmin \, ( \M_{f^\ast} + g^\ast \circ L_x ) = \Fix (\TT_{f,g} (x,\cdot) ), $$
for all $x \in \H$.
\end{proposition}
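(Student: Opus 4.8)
The plan is to prove the three claimed identities for $\prox^f_g(x)$ by establishing a chain of equivalences between the membership conditions, fixing $x \in \H$ throughout. Starting from the definition $\prox^f_g := (\I + \partial g \circ \prox_f)^{-1}$, we have $y \in \prox^f_g(x)$ if and only if $x - y \in \partial g(\prox_f(y))$, which by the defining inequality of the subdifferential reads $\langle x - y, z - \prox_f(y)\rangle \le g(z) - g(\prox_f(y))$ for all $z \in \H$. The key observation is that $\prox_f(y) = (\I + \partial f)^{-1}(y)$, equivalently $y - \prox_f(y) \in \partial f(\prox_f(y))$; and by Moreau's decomposition $\prox_f = \I - \prox_{f^\ast}$, so $\prox_f(y) = y - \prox_{f^\ast}(y)$. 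I would first translate $x - y \in \partial g(\prox_f(y))$ into the statement that $\prox_f(y)$ is the minimizer of $g + \frac12\|\cdot - (x + \prox_f(y) - y)\|^2$, i.e. $\prox_f(y) = \prox_g(x + \prox_f(y) - y)$. Substituting $\prox_f(y) = y - \prox_{f^\ast}(y)$ into this shows $y - \prox_{f^\ast}(y) = \prox_g(x - \prox_{f^\ast}(y))$, which after rearranging and using $\prox_g = \I - \prox_{g^\ast}$ and Lemma~\ref{lem2} becomes precisely $y = \TT_{f,g}(x, y)$, i.e.\ $y \in \Fix(\TT_{f,g}(x,\cdot))$. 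This handles the third equality.

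For the variational-inequality characterization, I would rewrite $x - y \in \partial g(\prox_f(y))$ as follows. Since $y \in \partial g^\ast(w) \iff w \in \partial g(y)$, and since $g^\ast \circ L_x$ has subdifferential $\partial(g^\ast \circ L_x)(y) = -\partial g^\ast(x - y)$ (chain rule for the affine map $L_x(y) = x - y$, whose "derivative" is $-\I$), the condition $x - y \in \partial g(\prox_f(y))$ is equivalent to $\prox_f(y) \in \partial g^\ast(x - y)$, hence to $-\prox_f(y) \in \partial(g^\ast \circ L_x)(y)$. That is exactly the statement $y \in \Sol_{\VI}(\prox_f, g^\ast \circ L_x)$, recalling the definition $\VI(A, h)$: $-A(y) \in \partial h(y)$ with $A = \prox_f$ (which is indeed single-valued and defined everywhere, as required by that definition). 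This gives the first equality.

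For the minimization characterization, I would note that $\prox_f = \nabla \M_{f^\ast}$ — this is one of the recalled facts (Moreau's envelope satisfies $\nabla \M_g = \prox_{g^\ast}$, applied with $g$ replaced by $f^\ast$, using $f^{\ast\ast} = f$). Therefore $\Sol_{\VI}(\prox_f, g^\ast \circ L_x)$ is the set of $y$ with $-\nabla \M_{f^\ast}(y) \in \partial(g^\ast \circ L_x)(y)$, i.e.\ $0 \in \nabla \M_{f^\ast}(y) + \partial(g^\ast \circ L_x)(y) = \partial(\M_{f^\ast} + g^\ast \circ L_x)(y)$, where the last equality uses that $\M_{f^\ast}$ is convex and differentiable everywhere (so the sum rule for subdifferentials applies with no qualification condition), together with convexity of $\M_{f^\ast} + g^\ast \circ L_x$. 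By Fermat's rule this is exactly $y \in \argmin(\M_{f^\ast} + g^\ast \circ L_x)$, giving the second equality. The main obstacle — really the only delicate point — is bookkeeping the Moreau-decomposition substitutions cleanly in the fixed-point step so that the expression collapses to $\TT_{f,g}(x,\cdot)$ via Lemma~\ref{lem2}; the VI and minimization steps are essentially formal once the affine chain rule and the identity $\prox_f = \nabla\M_{f^\ast}$ are in hand. One should also remember to record that all three descriptions automatically agree on their domains, since each equivalence is an "if and only if" valid for every $y \in \H$.
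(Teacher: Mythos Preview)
Your proof is correct and follows essentially the same approach as the paper: the same chain of equivalences via the subdifferential inversion $\partial g \leftrightarrow \partial g^\ast$, the chain rule $\partial(g^\ast\circ L_x)=-\partial g^\ast\circ L_x$, the identity $\prox_f=\nabla\M_{f^\ast}$ together with the sum rule (valid because $\dom\M_{f^\ast}=\H$), and Lemma~\ref{lem2} for the fixed-point part. One minor remark: once you have $\prox_f(y)=\prox_g(x+\prox_f(y)-y)$, the identity $y=\TT_{f,g}(x,y)$ follows directly from the very definition of $\TT_{f,g}$ without detouring through Moreau's decompositions and Lemma~\ref{lem2}.
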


\begin{proof}
In this proof we will use standard properties of convex analysis recalled in Section~\ref{secnot}. Let~$x \in \H$. One can easily prove that $\partial (g^\ast \circ L_x) = - \partial g^\ast \circ L_x$. For all~$y \in \H$, it holds that
\begin{eqnarray*}
y \in \prox^f_g (x) & \Longleftrightarrow & x-y \in \partial g ( \prox_f (y) ) \\
& \Longleftrightarrow & \prox_f (y) \in \partial g^\ast ( x-y ) \\
& \Longleftrightarrow &  -\prox_f (y) \in \partial (g^\ast \circ L_x) ( y ). 
\end{eqnarray*}
Moreover, since $\dom (\M_{f^\ast}) = \H$ and from Remark~\ref{remdsum}, we have
\begin{eqnarray*}
-\prox_f (y) \in \partial (g^\ast \circ L_x) ( y ) & \Longleftrightarrow & 0 \in \nabla \M_{f^\ast} (y) + \partial (g^\ast \circ L_x) ( y ) \\
& \Longleftrightarrow & 0 \in \partial ( \M_{f^\ast} + g^\ast \circ L_x )(y). 
\end{eqnarray*}
Finally,
\begin{eqnarray*}
-\prox_f (y) \in \partial (g^\ast \circ L_x) ( y ) & \Longleftrightarrow & \prox_{f^\ast} (y) \in y + \partial (g^\ast \circ L_x) (y) \\
& \Longleftrightarrow & y = \prox_{g^\ast \circ L_x} \circ \prox_{f^\ast} (y) .
\end{eqnarray*}
This concludes the proof from Lemma~\ref{lem2}. 
\end{proof}

\begin{proof}[Proof of Lemma~\ref{lem1}]
Let $x \in \D (\prox^f_g)$. In particular $\prox^f_g (x)$ is not empty. From Proposition~\ref{prop5}, we have
$$ \prox^f_g (x) = \argmin \, ( \M_{f^\ast} + g^\ast \circ L_x ). $$
Since $\M_{f^\ast} + g^\ast \circ L_x \in \Gamma_0 (\H)$, one can easily deduce that $\prox^f_g (x)$ is closed and convex.
\end{proof}

\subsection{A weakly convergent algorithm that computes $\prox^f_g$ numerically}\label{secnum}
Let $f$, $g \in \Gamma_0 (\H)$. In this section, our aim is to derive from Proposition~\ref{prop5} an algorithm, that depends only on the knowledge of $\prox_f$ and $\prox_g$, allowing to compute numerically an element of $\prox^f_g (x)$ for all~$x \in \D (\prox^f_g)$. We refer to Algorithm~\eqref{eqalgo} in Theorem~\ref{thm2}.

\medskip

Moreover, if the additivity condition $\partial (f+g) = \partial f + \partial g$ is satisfied, it follows from Theorem~\ref{thm1} that Algorithm~\eqref{eqalgo} is an algorithm allowing to compute numerically~$\prox_{f+g}(x) $ for all $x \in \H$ with the only knowledge of $\prox_f$ and $\prox_g$.

\begin{theorem}\label{thm2}
Let $f$, $g \in \Gamma_0 (\H)$ and let $x \in \D ( \prox^f_g)$ be fixed. Then, Algorithm~\eqref{eqalgo} given by
\begin{equation}\tag{$\mathcal{A}_1$}\label{eqalgo}
\left\lbrace
\begin{array}{l}
y_0 \in \H , \\[5pt]
y_{k+1} = \TT_{f,g} ( x , y_k ), 
\end{array}
 \right.
\end{equation}
weakly converges to an element $y^* \in \prox^f_g (x)$. Moreover, if $\dom (f) \cap \dom (g) \neq \emptyset$ and $\partial (f+g) = \partial f + \partial g$, it holds that $\prox_f (y^*) = \prox_{f+g} (x)$.
\end{theorem}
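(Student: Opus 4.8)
The key observation from Proposition~\ref{prop5} and Lemma~\ref{lem2} is that, for fixed $x$, the operator $\TT_{f,g}(x,\cdot) = \prox_{g^\ast \circ L_x} \circ \prox_{f^\ast}$ is a composition of two firmly nonexpansive operators, hence is itself \emph{averaged} (in fact $2/3$-averaged as a composition of two $1/2$-averaged maps). Therefore I would invoke the standard Krasnoselskii--Mann / Opial-type convergence theorem for averaged operators (see, e.g., \cite[Theorem 5.14 or Proposition 5.16]{BauCom17}): if $T:\H\to\H$ is averaged and $\Fix(T)\neq\emptyset$, then for any starting point $y_0\in\H$ the Picard iteration $y_{k+1}=T(y_k)$ converges weakly to a point of $\Fix(T)$. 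The hypothesis $x\in\D(\prox^f_g)$ is exactly what guarantees $\Fix(\TT_{f,g}(x,\cdot))=\prox^f_g(x)\neq\emptyset$, again by Proposition~\ref{prop5}. This immediately yields weak convergence of $(y_k)$ to some $y^\ast\in\Fix(\TT_{f,g}(x,\cdot))=\prox^f_g(x)$, which is the first assertion.

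\textbf{Key steps in order.} First I would record that $\prox_{f^\ast}$ and $\prox_{g^\ast\circ L_x}$ are firmly nonexpansive: the former because it is a proximal operator, the latter because $g^\ast\circ L_x\in\Gamma_0(\H)$ (as $L_x$ is affine and $g^\ast\in\Gamma_0(\H)$), so it too is a genuine proximal operator. Second, I would cite that a composition of finitely many firmly nonexpansive (equivalently $1/2$-averaged) operators is averaged, so $T:=\TT_{f,g}(x,\cdot)$ is averaged by Lemma~\ref{lem2}. Third, I would note $\Fix(T)=\prox^f_g(x)\neq\emptyset$ by Proposition~\ref{prop5} and the assumption $x\in\D(\prox^f_g)$. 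Fourth, I would apply the weak convergence theorem for averaged operators to conclude $y_k\rightharpoonup y^\ast$ for some $y^\ast\in\prox^f_g(x)$. Finally, for the ``moreover'' part, assuming $\dom(f)\cap\dom(g)\neq\emptyset$ and $\partial(f+g)=\partial f+\partial g$, I would simply apply the decomposition formula of Theorem~\ref{thm1}: since $y^\ast\in\prox^f_g(x)$, we have $\prox_f(y^\ast)=(\prox_f\circ\prox^f_g)(x)=\prox_{f+g}(x)$.

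\textbf{Main obstacle.} There is no serious analytic obstacle here; the content is really just the identification in Lemma~\ref{lem2} plus an off-the-shelf fixed-point convergence theorem. The only point requiring a little care is making sure the averagedness-of-compositions result is quoted in a form that applies, and that $\prox_{g^\ast\circ L_x}$ is correctly recognized as a proximal operator (so that firm nonexpansiveness, not merely nonexpansiveness, is available) — without firm nonexpansiveness one would only get an averaged composition in a weaker sense and the clean weak-convergence statement could fail. Everything else is a direct chain of citations and an application of Theorem~\ref{thm1}.
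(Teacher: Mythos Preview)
Your proposal is correct and follows essentially the same route as the paper: use Lemma~\ref{lem2} to recognize $\TT_{f,g}(x,\cdot)$ as a composition of two firmly nonexpansive (hence $\tfrac{1}{2}$-averaged) proximal maps, invoke Proposition~\ref{prop5} together with $x\in\D(\prox^f_g)$ to ensure the fixed-point set is nonempty, apply the standard weak-convergence theorem for averaged operators (the paper cites \cite[Theorem~5.23]{BauCom17}), and finish with Theorem~\ref{thm1}. The only cosmetic difference is that the paper pins the averagedness and convergence references to specific results in \cite{BauCom17}, whereas you gesture at the Krasnoselskii--Mann framework in slightly more general terms.
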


\begin{proof}
From Lemma~\ref{lem2}, $\TT_{f,g} (x,\cdot)$ coincides with the composition of two firmly non-expansive operators, and thus of two non-expansive and $\frac{1}{2}$-averaged operators (see~\cite[Remark~4.34(iii) p.81]{BauCom17}). Since $x \in \D ( \prox^f_g)$, it follows from Proposition~\ref{prop5} and Lemma~\ref{lem2} that $\Fix ( \prox_{g^\ast \circ L_x} \circ \prox_{f^\ast} ) \neq \emptyset$. We conclude from \cite[Theorem 5.23 p.100]{BauCom17} that Algorithm~\eqref{eqalgo} weakly converges to a fixed point $y^*$ of $\TT_{f,g} (x,\cdot)$. From Proposition~\ref{prop5}, it holds that $y^* \in \prox^f_g (x)$. Finally, if $\dom (f) \cap \dom (g) \neq \emptyset$ and $\partial (f+g) = \partial f + \partial g$, we conclude that $\prox_f (y^*) = \prox_{f+g} (x)$ from Theorem~\ref{thm1}.
\end{proof}

\begin{remark}\label{remcomb2bis}
As already mentioned in the introduction, it turns out that Algorithm~\eqref{eqalgo} was already considered, up to some translations, and implemented in previous works (see, e.g., the so-called \textit{dual forward-backward splitting} in \cite[Algorithm~3.5]{combdungvu}), showing that the $f$-proximal operator $\prox^f_g$ is already present (in a hidden form) and useful for numerical purposes in the existing literature. However, to the best of our knowledge, it has never been explicitly expressed in a closed formula such as~\eqref{eqdefdef} and neither been deeply studied from a theoretical point of view.
\end{remark}

\begin{remark}\label{remcomb2}
Let us discuss with more details the relationship between the present work and the one proposed in~\cite{combdungvu}. Let $x \in \H$. In \cite[Proposition~3.4]{combdungvu}, the authors prove that if $v \in \H$ is a solution to
\begin{equation}\label{eqopcomb}
\argmin \, ( \M_{f^\ast} \circ L_x + g^\ast  ) 
\end{equation}
then $\prox_f(x-v) = \prox_{f+g}(x)$. Combining this result with Proposition~\ref{prop5} easily constitutes an alternative proof of the new decomposition formula \eqref{eqdecomp} derived in this paper. Moreover, in \cite[Algorithm~3.5]{combdungvu}, the authors consider the so-called \textit{dual forward-backward splitting} given by
$$ v_{k+1} = \prox_{g^*} (v_k + \prox_f (x-v_k) ), $$
which is related to Algorithm~\eqref{eqalgo} by setting $y_k = x - v_k$. From Proposition~\ref{prop5}, the present work points out that the operator given in~\eqref{eqopcomb} actually coincides, up to a translation, with a generalization of the classical proximal operator, that is exactly the $f$-proximal operator introduced and studied from a theoretical point of view in this paper. In this section we also prove that Algorithm~\eqref{eqalgo} actually coincides with a fixed-point algorithm associated to a generalized version of the classical Douglas-Rachford operator.
\end{remark}

\begin{remark}
Let $f$, $g \in \Gamma_0 (\H)$ and let $x \in \D ( \prox^f_g)$. Algorithm~\eqref{eqalgo} consists in a fixed-point algorithm from the characterization given in Proposition~\ref{prop5} by
$$ \prox^f_g (x) = \Fix (\TT_{f,g} (x,\cdot) ). $$
Actually, one can easily see that Algorithm~\eqref{eqalgo} also coincides with the well-known \textit{Forward-Backward algorithm} {(see \cite[Section 10.3 p.191]{ComPes11} for details)} from the characterization given in Proposition~\ref{prop5} by
$$ \prox^f_g (x) = \argmin \, ( \M_{f^\ast} + g^\ast \circ L_x ) .$$
Indeed, we recall that $\M_{f^\ast}$ is differentiable with $\nabla \M_{f^\ast} = \prox_f$. We also refer to Section~\ref{sec41} for a brief discussion about the Forward-Backward algorithm.
\end{remark}

\subsection{Recovering a classical result from the decomposition formula}\label{sec33}
Let $f$, $g \in \Gamma_0 (\H)$ with $\dom (f) \cap \dom (g) \neq \emptyset$ and such that $\partial (f+g) = \partial f + \partial g$. Our aim in this section is to recover in a simple way, with the help of the decomposition formula~\eqref{eqdecomp}, the well-known equality
$$ \argmin \, f +g = \prox_f ( \Fix (\T_{f,g} ) ). $$
This result can be found for example in \cite[Proposition~26.1]{BauCom17}.

\begin{lemma}\label{lem3}
Let $f$, $g \in \Gamma_0(\H)$. It holds that
$$ \Fix (\T_{f,g} ) = \Fix ( \prox^f_g \circ \prox_f ). $$
\end{lemma}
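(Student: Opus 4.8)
The plan is to prove the equality $\Fix(\T_{f,g}) = \Fix(\prox^f_g \circ \prox_f)$ by showing the two inclusions via a direct unwinding of the definitions, using the relation $\T_{f,g}(y) = \TT_{f,g}(\prox_f(y), y)$ noted just after the definition of $\TT_{f,g}$, together with the characterization $\prox^f_g(x) = \Fix(\TT_{f,g}(x,\cdot))$ from Proposition~\ref{prop5}. The key observation is that a point $y$ is a fixed point of $\T_{f,g}$ precisely when $y$ is a fixed point of $\TT_{f,g}(\prox_f(y),\cdot)$, which by Proposition~\ref{prop5} means $y \in \prox^f_g(\prox_f(y))$, i.e. $y \in (\prox^f_g \circ \prox_f)(y)$.

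First I would take $y \in \Fix(\T_{f,g})$. Then $y = \T_{f,g}(y) = \TT_{f,g}(\prox_f(y), y)$, so $y \in \Fix(\TT_{f,g}(\prox_f(y),\cdot))$. By Proposition~\ref{prop5} applied with $x = \prox_f(y)$, this set equals $\prox^f_g(\prox_f(y))$, hence $y \in \prox^f_g(\prox_f(y)) = (\prox^f_g \circ \prox_f)(y)$, so $y \in \Fix(\prox^f_g \circ \prox_f)$. Conversely, if $y \in \Fix(\prox^f_g \circ \prox_f)$, then $y \in \prox^f_g(\prox_f(y))$, which by Proposition~\ref{prop5} with $x = \prox_f(y)$ means $y \in \Fix(\TT_{f,g}(\prox_f(y),\cdot))$, i.e. $y = \TT_{f,g}(\prox_f(y),y) = \T_{f,g}(y)$, so $y \in \Fix(\T_{f,g})$. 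This closes both inclusions and gives the claimed equality.

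There is no real obstacle here; the statement is essentially a reformulation that repackages Proposition~\ref{prop5} through the identity $\T_{f,g}(y) = \TT_{f,g}(\prox_f(y),y)$. The only point requiring a slight care is that one must apply Proposition~\ref{prop5} with the \emph{running} argument $x = \prox_f(y)$ (which varies with $y$), rather than with a fixed $x$; but since Proposition~\ref{prop5} holds for all $x \in \H$, this causes no difficulty. The argument is symmetric in the two directions and each step is a biconditional, so one could equally phrase the whole proof as a chain of equivalences:
\begin{equation*}
y \in \Fix(\T_{f,g}) \iff y \in \Fix\big(\TT_{f,g}(\prox_f(y),\cdot)\big) \iff y \in \prox^f_g(\prox_f(y)) \iff y \in \Fix(\prox^f_g \circ \prox_f).
\end{equation*}
I expect the author's proof to be just a few lines along exactly these lines, with Proposition~\ref{prop5} cited for the middle equivalence.
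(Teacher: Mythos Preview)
Your proof is correct and matches the paper's own proof essentially line for line: the paper presents exactly the chain of equivalences you wrote at the end, citing Proposition~\ref{prop5} for the identification $\Fix(\TT_{f,g}(\prox_f(z),\cdot)) = \prox^f_g(\prox_f(z))$ and using $\T_{f,g}(z) = \TT_{f,g}(\prox_f(z),z)$ for the first step. Your remark about applying Proposition~\ref{prop5} with the running argument $x=\prox_f(y)$ is well taken and is handled implicitly in the paper just as you describe.
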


\begin{proof}
Let $z \in \H$. It holds from Proposition~\ref{prop5} that
\begin{eqnarray*}
z  \in \Fix (\T_{f,g} ) & \Longleftrightarrow & z = \T_{f,g} (z) = \TT_{f,g} ( \prox_f (z) , z) \\
& \Longleftrightarrow & z \in \Fix ( \TT_{f,g} ( \prox_f (z) , \cdot ) ) = \prox^f_g ( \prox_f ( z ) ) \\
& \Longleftrightarrow & z \in \Fix ( \prox^f_g \circ \prox_f ). 
\end{eqnarray*}
The proof is complete.
\end{proof}

\begin{proposition}\label{proppourintro}
Let $f$, $g \in \Gamma_0 (\H)$ with $\dom (f) \cap \dom (g) \neq \emptyset$ and such that $\partial (f+g) = \partial f + \partial g$. It holds that
$$ \argmin \, f +g = \prox_f ( \Fix (\T_{f,g} ) ). $$
\end{proposition}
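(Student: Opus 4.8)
The plan is to chain together the decomposition formula \eqref{eqdecomp}, Lemma~\ref{lem3}, and the standard facts $\Fix(\prox_{f+g}) = \argmin f+g$ and $\Fix(\prox_g) = \argmin g$. First I would establish the inclusion $\prox_f(\Fix(\T_{f,g})) \subset \argmin f+g$ exactly as sketched in the introduction: if $x^* \in \Fix(\T_{f,g})$, then by Lemma~\ref{lem3} we have $x^* \in \Fix(\prox^f_g \circ \prox_f)$, i.e.\ $x^* \in \prox^f_g(\prox_f(x^*))$; applying $\prox_f$ and using Theorem~\ref{thm1} gives $\prox_f(x^*) = \prox_f(\prox^f_g(\prox_f(x^*))) = \prox_{f+g}(\prox_f(x^*))$, so $\prox_f(x^*)$ is a fixed point of $\prox_{f+g}$, hence lies in $\argmin f+g$.

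For the reverse inclusion, I would start from $w \in \argmin f+g = \Fix(\prox_{f+g})$ and produce a preimage point $x^* \in \Fix(\T_{f,g})$ with $\prox_f(x^*) = w$. Since $0 \in \partial(f+g)(w) = \partial f(w) + \partial g(w)$, there is $w_f \in \partial f(w)$ with $-w_f \in \partial g(w)$. Set $x^* := w + w_f$; then $w = \prox_f(x^*)$, and $x^* - x^* = 0 \in \partial g(w) + \dots$ — more precisely, I want to check $x^* \in \prox^f_g(\prox_f(x^*))$, i.e.\ $x^* - x^* = 0 \in \partial g(\prox_f(x^*)) = \partial g(w)$; but that would need $0 \in \partial g(w)$, which is \emph{not} what we have. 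So instead I must be more careful: the natural preimage is $x^* := \prox_f(x^*) + w_f$ where I genuinely solve $x^* \in \prox^f_g(\prox_f(x^*))$. Writing $y := \prox_f(x^*)$ unknown, the condition $x^* \in \prox^f_g(x^*)$ composed with $y = \prox_f(x^*)$ is circular; the clean route is: take $w \in \argmin f+g$, pick $x^* = w + w_f$ with $w_f \in \partial f(w)$ and $-w_f \in \partial g(w)$, so that $\prox_f(x^*) = w$ and $x^* - w = w_f$; then $\TT_{f,g}(w, x^*)$ requires checking $x^* \in \Fix(\TT_{f,g}(\prox_f(x^*), \cdot)) = \Fix(\TT_{f,g}(w,\cdot)) = \prox^f_g(w)$, i.e.\ $x^* - w \in \partial g(\prox_f(w))$. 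Here lies the subtlety: $\prox_f(w)$ need not equal $w$. The correct choice is to instead show $x^* - w \in \partial g(w)$ and invoke that $\prox_f(w) = \prox_f(x^*) = w$ is already forced; but $x^* - w = w_f$, and we only arranged $-w_f \in \partial g(w)$, so I should instead take $w_g \in \partial g(w)$ and set $x^* = w - w_g + \text{(correction)}$ — the bookkeeping must be done so that $x^* \in \prox^f_g(\prox_f(x^*))$ holds with $\prox_f(x^*) = w$, which forces $x^* - w \in \partial g(w)$ and $x^* \in w + \partial f(w)$, i.e.\ $x^* - w \in \partial f(w) \cap \partial g(w)$; existence of such $x^*$ is exactly $0 \in \partial f(w) - \partial g(w)$... which again is not guaranteed. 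This indicates the proof should \emph{not} go through $\prox^f_g$ directly but rather use Lemma~\ref{lem3} together with the observation that $\prox_f(\Fix(\prox^f_g \circ \prox_f)) = \Fix(\prox_f \circ \prox^f_g) = \Fix(\prox_{f+g})$ as a set identity (conjugacy of fixed point sets of $AB$ and $BA$ under the maps $A,B$), which is the real engine.

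Concretely, the step I expect to be the main obstacle is establishing $\argmin f+g \subset \prox_f(\Fix(\T_{f,g}))$, i.e.\ producing, for each $w \in \Fix(\prox_{f+g})$, an $x^* \in \Fix(\prox^f_g\circ\prox_f)$ with $\prox_f(x^*) = w$. The way through is: since $w = \prox_{f+g}(w) = \prox_f(z)$ for every $z \in \prox^f_g(w)$ by Theorem~\ref{thm1}, and since $\prox^f_g(w) \neq \emptyset$ (because $w \in \H = \D(\prox^f_g)$ by Proposition~\ref{prop1}, using Condition~\eqref{eqcondition1}), pick any $x^* \in \prox^f_g(w)$. Then $\prox_f(x^*) = w$, hence $\prox^f_g(\prox_f(x^*)) = \prox^f_g(w) \ni x^*$, so $x^* \in \Fix(\prox^f_g \circ \prox_f) = \Fix(\T_{f,g})$ by Lemma~\ref{lem3}, and $\prox_f(x^*) = w$ gives $w \in \prox_f(\Fix(\T_{f,g}))$. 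Combined with the forward inclusion from the first paragraph, this yields the claimed equality. I would write this up in roughly ten lines, citing Theorem~\ref{thm1}, Proposition~\ref{prop1}, Lemma~\ref{lem3}, and the recalled identity $\Fix(\prox_{f+g}) = \argmin f+g$.

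\begin{proof}
We first prove the inclusion $\prox_f(\Fix(\T_{f,g})) \subset \argmin f+g$. Let $x^* \in \Fix(\T_{f,g})$. By Lemma~\ref{lem3}, $x^* \in \Fix(\prox^f_g \circ \prox_f)$, that is, $x^* \in \prox^f_g(\prox_f(x^*))$. Applying $\prox_f$ and using the decomposition formula~\eqref{eqdecomp} of Theorem~\ref{thm1}, we get
$$ \prox_f(x^*) = \prox_f\big(\prox^f_g(\prox_f(x^*))\big) = \prox_{f+g}(\prox_f(x^*)). $$
Hence $\prox_f(x^*) \in \Fix(\prox_{f+g}) = \argmin f+g$.

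Conversely, let $w \in \argmin f+g = \Fix(\prox_{f+g})$. Since $\dom(f) \cap \dom(g) \neq \emptyset$ and $\partial(f+g) = \partial f + \partial g$, Proposition~\ref{prop1} gives $\D(\prox^f_g) = \H$, so $\prox^f_g(w) \neq \emptyset$. Pick $x^* \in \prox^f_g(w)$. By Theorem~\ref{thm1}, $\prox_f(x^*) = \prox_{f+g}(w) = w$. Therefore $\prox^f_g(\prox_f(x^*)) = \prox^f_g(w) \ni x^*$, i.e.\ $x^* \in \Fix(\prox^f_g \circ \prox_f) = \Fix(\T_{f,g})$ by Lemma~\ref{lem3}. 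Since $\prox_f(x^*) = w$, we conclude that $w \in \prox_f(\Fix(\T_{f,g}))$. This completes the proof.
\end{proof}
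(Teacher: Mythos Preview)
Your final proof is correct and follows essentially the same approach as the paper's: both directions rely on the decomposition formula~\eqref{eqdecomp}, Proposition~\ref{prop1}, and the identification of $\Fix(\T_{f,g})$ with $\Fix(\prox^f_g\circ\prox_f)$. The only cosmetic difference is that you invoke Lemma~\ref{lem3} symmetrically in both inclusions, whereas the paper, for the reverse inclusion, appeals directly to Proposition~\ref{prop5} (writing $y\in\prox^f_g(x)=\Fix(\TT_{f,g}(x,\cdot))$ and then substituting $x=\prox_f(y)$) --- which is exactly the content of Lemma~\ref{lem3} unpacked.
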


\begin{proof}
Let $y \in \Fix (\T_{f,g})$. Then $y \in \Fix ( \prox^f_g \circ \prox_f )$ from Lemma~\ref{lem3}. Thus $ y \in \prox^f_g \circ \prox_f (y)$. From the decomposition formula~\eqref{eqdecomp}, we get that $\prox_f (y) = \prox_{f+g} ( \prox_f (y) )$ and thus $\prox_f (y) \in \argmin \, f+g$.

\medskip

Let $x \in \argmin \, f +g$. Since $\D (\prox^f_g) = \H$ from Proposition~\ref{prop1}, let us consider $y \in \prox^f_g (x)$. From the decomposition formula~\eqref{eqdecomp}, it holds that $x=\prox_{f+g} (x) = \prox_f(y)$. Let us prove that $y \in \Fix (\T_{f,g})$. Since $y \in \prox^f_g (x) = \Fix ( \TT_{f,g} ( x , \cdot ) )$, we get that $y = \TT_{f,g} ( x , y) = \TT_{f,g} ( \prox_{f} (y) , y) = \T_{f,g} (y)$. The proof is complete.
\end{proof}

\section{Some other applications and forthcoming works}\label{sec4}

This section can be seen as a conclusion of the paper. Its aim is to provide a glimpse of some other applications of our main result (Theorem~\ref{thm1}) and to raise open questions for forthcoming works. This section is splitted into two parts.

\subsection{Relations with the Forward-Backward operator}\label{sec41}
Let $f$, $g \in \Gamma_0 (\H)$ such that $g$ is differentiable on $\H$. In that situation, note that the additivity condition~$\partial (f+g) = \partial f + \partial g$ is satisfied from Remark~\ref{remdsum}, and that Proposition~\ref{prop2} implies that $\prox^f_g$ is single-valued.

\medskip

In that framework, the classical \textit{Forward-Backward operator} $\BF_{f,g} : \H \to \H$ associated to $f$ and $g$ is usually defined by
$$ \BF_{f,g} (y) :=  \prox_f ( y - \nabla g (y) ), $$
for all $y \in \H$. {We refer to \cite[Section 10.3 p.191]{ComPes11} for more details.} Let us introduce the extension~$\BFF_{f,g} : \H \times \H \to \H$  defined by
$$ \BFF_{f,g} (x,y) := \prox_f ( x - \nabla g (y) ), $$
for all $x$, $y \in \H$. In particular, it holds that $ \BF_{f,g} (y) = \BFF_{f,g} ( y , y )$ for all $ y \in \H$. The following result follows from the decomposition formula~\eqref{eqdecomp} in Theorem~\ref{thm1}.

\begin{proposition}\label{prop6}
Let $f$, $g \in \Gamma_0 (\H)$ such that $g$ is differentiable on $\H$. Then
$$ \prox_{f+g} (x) = \Fix ( \BFF_{f,g} (x ,\cdot ) ), $$
for all $x \in \H$.
\end{proposition}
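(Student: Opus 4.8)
The plan is to show that, for a fixed $x \in \H$, the fixed points of $\BFF_{f,g}(x,\cdot)$ are exactly the singleton $\{\prox_{f+g}(x)\}$, by reducing the problem to the decomposition formula~\eqref{eqdecomp} together with the definition~\eqref{eqdefdef} of $\prox^f_g$. First I would recall that, since $g$ is differentiable on $\H$, Remark~\ref{remdsum} guarantees the additivity condition $\partial(f+g) = \partial f + \partial g$, so Theorem~\ref{thm1} applies and gives $\prox_{f+g}(x) = \prox_f(z)$ for the unique $z \in \prox^f_g(x)$ (uniqueness by Proposition~\ref{prop2}, since $\partial g = \{\nabla g\}$ is single-valued). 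So it suffices to identify $\Fix(\BFF_{f,g}(x,\cdot))$ with $\prox_f(\prox^f_g(x))$.

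The key computation is a chain of equivalences for $y \in \H$. Starting from $y = \BFF_{f,g}(x,y) = \prox_f(x - \nabla g(y))$, I would rewrite this via the characterization $\prox_f = (\I + \partial f)^{-1}$: $y = \prox_f(x - \nabla g(y))$ is equivalent to $x - \nabla g(y) \in y + \partial f(y)$, i.e. $x - y - \nabla g(y) \in \partial f(y)$, i.e. $x \in y + \partial f(y) + \nabla g(y) = y + \partial f(y) + \partial g(y) = y + \partial(f+g)(y)$, using the additivity condition. This last relation says precisely $y = \prox_{f+g}(x)$. Hence $\Fix(\BFF_{f,g}(x,\cdot)) = \{\prox_{f+g}(x)\}$, which is the claim.

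Alternatively — and this is the route that more directly "follows from the decomposition formula" as the statement advertises — I would connect $\BFF$ to $\prox^f_g$: for $w = \prox_{f+g}(x)$, the unique $z \in \prox^f_g(x)$ satisfies $\prox_f(z) = w$ and $x - z \in \partial g(\prox_f(z)) = \{\nabla g(w)\}$, so $z = x - \nabla g(w)$, and then $w = \prox_f(z) = \prox_f(x - \nabla g(w)) = \BFF_{f,g}(x,w)$, showing $w \in \Fix(\BFF_{f,g}(x,\cdot))$; the reverse inclusion is the equivalence chain above. The main (and only mild) obstacle is bookkeeping: one must be careful that $\prox^f_g$ is genuinely single-valued here so that "$z \in \prox^f_g(x)$" can be manipulated as an equality, and that $\partial g(\prox_f(y)) = \{\nabla g(\prox_f(y))\}$ everywhere since $g$ is differentiable on all of $\H$; both are already secured by the remarks and propositions cited above, so no real difficulty remains.
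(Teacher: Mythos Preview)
Your proposal is correct. Your ``alternative'' route for the forward inclusion (showing $w=\prox_{f+g}(x)$ is a fixed point by writing $z=\prox^f_g(x)=x-\nabla g(w)$ and applying $\prox_f$) is exactly what the paper does. For the reverse inclusion, however, the paper stays with the decomposition formula: given a fixed point $z=\prox_f(x-\nabla g(z))$, it sets $y:=x-\nabla g(z)$, observes $z=\prox_f(y)$ so that $x=y+\nabla g(\prox_f(y))$, i.e.\ $y=\prox^f_g(x)$, and then concludes $z=\prox_f\circ\prox^f_g(x)=\prox_{f+g}(x)$ via Theorem~\ref{thm1}. Your first approach instead unwinds $\prox_f=(\I+\partial f)^{-1}$ and uses the additivity $\partial(f+g)=\partial f+\partial g$ directly, which is shorter and in fact bypasses both $\prox^f_g$ and the decomposition formula entirely; the trade-off is that the paper's version keeps the proposition visibly a corollary of Theorem~\ref{thm1}, which is the narrative point of Section~\ref{sec41}.
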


\begin{proof}
Let $x \in \H$. Firstly, let $z = \prox_{f+g} (x)$ and let $y = \prox^f_g (x)$. In particular, we have $x = y + \nabla g (\prox_f (y))$. From the decomposition formula~\eqref{eqdecomp}, we get that $z = \prox_f (y) = \prox_f ( x - \nabla g ( \prox_f (y) ) ) = \prox_f ( x - \nabla g ( z ) ) = \BFF_{f,g} (x,z) $. Conversely, let $z \in \Fix ( \BFF_{f,g} (x ,\cdot ) )$, that is, $z = \prox_f ( x - \nabla g ( z ) )$. Considering $y = x - \nabla g (z)$, we have $z = \prox_f (y)$ and thus $x=y+\nabla g( \prox_f(y))$, that is, $y = \prox^f_g (x)$. Finally, from the decomposition formula~\eqref{eqdecomp}, we get that $z = \prox_f \circ \prox^f_g (x) = \prox_{f+g} (x)$.
\end{proof}

From Proposition~\ref{prop6}, we retrieve the following classical result (see, e.g., \cite[Proposition~26.1]{BauCom17}).

\begin{proposition}
Let $f$, $g \in \Gamma_0 (\H)$ such that $g$ is differentiable on $\H$. Then
$$ \argmin \, f + g = \Fix ( \BF_{f,g} ). $$
\end{proposition}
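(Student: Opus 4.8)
The plan is to derive this directly from Proposition~\ref{prop6}, exactly mirroring the way Proposition~\ref{proppourintro} was deduced from the decomposition formula in Section~\ref{sec33}. Recall that in the present framework ($g$ differentiable on $\H$) the additivity condition $\partial(f+g)=\partial f+\partial g$ holds automatically from Remark~\ref{remdsum}, and that $\Fix(\prox_{f+g})=\argmin\,f+g$ by the basic properties of the proximal operator recalled in Section~\ref{secnot}. So it suffices to show $\Fix(\BF_{f,g})=\Fix(\prox_{f+g})$.

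First I would unwind the definitions: for $y\in\H$, $y\in\Fix(\BF_{f,g})$ means $y=\BF_{f,g}(y)=\prox_f(y-\nabla g(y))=\BFF_{f,g}(y,y)$. On the other hand, Proposition~\ref{prop6} tells us that $\prox_{f+g}(y)=\Fix(\BFF_{f,g}(y,\cdot))$, i.e. $y=\prox_{f+g}(y)$ if and only if $y\in\Fix(\BFF_{f,g}(y,\cdot))$, that is $y=\BFF_{f,g}(y,y)$. Comparing these two characterizations, both reduce to the single identity $y=\BFF_{f,g}(y,y)$, and hence
\[
y\in\Fix(\BF_{f,g})\iff y=\BFF_{f,g}(y,y)\iff y\in\Fix(\BFF_{f,g}(y,\cdot))\iff y=\prox_{f+g}(y)\iff y\in\argmin\,f+g,
\]
where the last equivalence is $\Fix(\prox_{f+g})=\argmin\,f+g$. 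This closes the argument.

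I do not expect any real obstacle here: the statement is essentially an immediate corollary, and the only thing to be careful about is the logical bookkeeping of the two "$x$-dependent" fixed-point characterizations — making explicit that evaluating the generalized operator $\BFF_{f,g}(x,\cdot)$ at $x=y$ and then looking for a fixed point $y$ coincides with the fixed-point condition for the classical operator $\BF_{f,g}$, since $\BF_{f,g}(y)=\BFF_{f,g}(y,y)$ by construction. Alternatively, one could bypass Proposition~\ref{prop6} and argue directly that $z\in\Fix(\BF_{f,g})$ iff $z=\prox_f(z-\nabla g(z))$ iff $z-\nabla g(z)\in z+\partial f(z)$ iff $-\nabla g(z)\in\partial f(z)$ iff $0\in\partial f(z)+\nabla g(z)=\partial f(z)+\partial g(z)=\partial(f+g)(z)$ iff $z\in\argmin\,f+g$; but the first route is shorter given that Proposition~\ref{prop6} is already established, so that is the one I would write up.
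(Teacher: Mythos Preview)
Your proposal is correct and follows essentially the same approach as the paper: the paper's proof is exactly the chain of equivalences
\[
x\in\argmin\,f+g \iff x=\prox_{f+g}(x) \iff x\in\Fix(\BFF_{f,g}(x,\cdot)) \iff x=\BFF_{f,g}(x,x)=\BF_{f,g}(x) \iff x\in\Fix(\BF_{f,g}),
\]
which is your argument read in the opposite direction. Your remark about the logical bookkeeping (that $\BF_{f,g}(y)=\BFF_{f,g}(y,y)$ makes the two fixed-point conditions coincide) is precisely the point on which the middle equivalence rests.
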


\begin{proof}
Let $x \in \H$. It holds that
\begin{eqnarray*}
x \in \argmin \, f + g & \Longleftrightarrow & x = \prox_{f+g} (x) \\
& \Longleftrightarrow & x \in \Fix ( \BFF_{f,g} (x ,\cdot ) ) \\
& \Longleftrightarrow &  x = \BFF_{f,g} (x,x) = \BF_{f,g}(x) \\
& \Longleftrightarrow &  x \in \Fix (\BF_{f,g}).
\end{eqnarray*}
The proof is complete.
\end{proof}

Let $f$, $g \in \Gamma_0 (\H)$ such that $g$ is differentiable on $\H$. The classical \textit{Forward-Backward algorithm} $x_{n+1} = \BF_{f,g} (x_n)$ is a powerful tool since it provides an algorithm, only requiring the knowledge of $\prox_f$ and $\nabla g$, that weakly converges (under some conditions on $g$, see \cite[Section~28.5 p.522]{BauCom17} for details) to a fixed point of $\BF_{f,g}$, and thus to a minimizer of $f+g$.

\medskip

From Proposition~\ref{prop6}, and for all $x \in \H$, one can consider the algorithm (potentially weakly convergent) given by
\begin{equation}\tag{$\mathcal{A}_2$}\label{eqalgo3}
\left\lbrace
\begin{array}{l}
y_0 \in \H , \\[5pt]
y_{k+1} = \BFF_{f,g} ( x , y_k ), 
\end{array}
 \right.
\end{equation}
in order to compute numerically $\prox_{f+g}(x)$, with the only knowledge of $\prox_f$ and~$\nabla g$. Convergence proof (under some assumptions on $f$ and $g$) of Algorithm~\eqref{eqalgo3} should be the topic of a future work.

\subsection{Application to sensitivity analysis for variational inequalities}\label{sec42}
As a conclusion of the present paper, we turn back to our initial motivation, namely the sensitivity analysis, with respect to a nonnegative parameter $t \geq 0$, of some parameterized linear variational inequalities of second kind in a real Hilbert space $\H$. More precisely, for all $t \geq 0$, we consider the variational inequality which consists of finding $u(t) \in \K$ such that
\begin{equation*} 
\langle u(t) , z - u(t) \rangle + g(z) - g (u(t)) \geq \langle r(t) , z - u(t) \rangle ,
\end{equation*}
for all $z \in \K$, where $\K \subset \H$ is a nonempty closed and convex set of constraints, and where $g \in \Gamma_{0}(\H)$ and $r : \R^+ \to \H$ are assumed to be given and smooth enough. The above problem admits a unique solution given by
$$ u(t) = \prox_{f+g} (r(t)), $$
where $f = \iota_{\K}$ is the indicator function of $\K$.



\medskip

Our aim is to provide from Theorem~\ref{thm1} a simple and compact formula for the derivative $u'(0)$ under some assumptions (see Proposition~\ref{PropSensitivity} for details). Following the idea of F.~Mignot in~\cite{Mig76} (see also \cite[Theorem~2 p.620]{haraux}), we first introduce the following sets
\begin{eqnarray*}
O_v & := & \left\lbrace w \in \H \mid \exists \lambda >0 , \; \proj_{\K} (v) + \lambda w \in \K \right\rbrace \cap \left[ v - \proj_{\K} (v) \right]^{\perp} , \\
C_v & := & \cl \Big( \left\lbrace w \in \H \mid \exists \lambda >0 , \; \proj_{K} (v) + \lambda w \in \K \right\rbrace \Big) \cap \left[  v - \proj_{\K} (v) \right]^{\perp} ,
\end{eqnarray*}
for all $v \in \H$, where $\perp$ denotes the classical orthogonal of a set. 

\begin{proposition} \label{PropSensitivity}
Let $v(t) := r(t) - \nabla g (u(t))$ for all $t \in \R$. If the following conditions are satisfied:
\begin{enumerate}
\item[\rm{(i)}] $r$ is differentiable at $t=0$;
\item[\rm{(ii)}] $g$ is twice differentiable on $\H$;
\item[\rm{(iii)}] $O_{v(0)}$ is dense in $C_{v(0)}$;
\item[\rm{(iv)}] $u$ is differentiable at $t=0$;
\end{enumerate}
then the derivative~$u'(0)$ is given by
$$
u'(0) = \prox_{\varphi_f + \psi_g} ( r'(0) ) ,
$$
where $\varphi_f := \iota_{C_{v(0)}}$ and $\psi_g (x) := \frac{1}{2} \langle \D^2 g(u(0))(x),x \rangle$ for all $x \in \H$.
\end{proposition}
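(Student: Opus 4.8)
The plan is to exploit the decomposition formula~\eqref{eqdecomp} together with the characterization of $\prox^f_g$ as the solution of a variational inequality, and then to apply Mignot's sensitivity result to the resulting family of projections. First I would observe that, since $f = \iota_\K$ is an indicator function of a closed convex set, we have $\dom(f) \cap \int(\dom(g)) = \K \cap \H = \K \neq \emptyset$ (as $g$ is differentiable, hence finite, on $\H$), so the additivity condition~\eqref{eqcondition1} holds by Remark~\ref{remdsum}, and Theorem~\ref{thm1} applies: $u(t) = \prox_{f+g}(r(t)) = \prox_f\bigl(\prox^f_g(r(t))\bigr) = \proj_\K\bigl(\prox^f_g(r(t))\bigr)$. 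The key algebraic step is to unwind $\prox^f_g(r(t))$: by definition, $y(t) := \prox^f_g(r(t))$ satisfies $r(t) - y(t) \in \partial g(\prox_f(y(t))) = \{\nabla g(u(t))\}$, hence $y(t) = r(t) - \nabla g(u(t)) = v(t)$. Therefore $u(t) = \proj_\K(v(t))$ for all $t$, which converts the variational-inequality problem into a plain projection problem parameterized by $t$.

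Next I would bring in Mignot's theorem (as quoted, \cite[Theorem~2.1]{Mig76}, see also \cite[Theorem~2 p.620]{haraux}): under the density condition~(iii) that $O_{v(0)}$ is dense in $C_{v(0)}$, the map $v \mapsto \proj_\K(v)$ is conically differentiable at $v(0)$ with derivative $\proj_{C_{v(0)}}$; that is, for any differentiable curve $t \mapsto v(t)$ with $v(0)$ fixed, the composition $t \mapsto \proj_\K(v(t))$ is differentiable at $t=0$ provided $v'(0)$ exists, and
$$
\frac{d}{dt}\Big|_{t=0}\proj_\K(v(t)) = \proj_{C_{v(0)}}(v'(0)).
$$
So I must first check that $v(\cdot)$ is differentiable at $t=0$: since $r$ is differentiable at $t=0$ by~(i), $u$ is differentiable at $t=0$ by~(iv), and $g$ is twice differentiable on $\H$ by~(ii), the chain rule gives that $t \mapsto \nabla g(u(t))$ is differentiable at $t=0$ with derivative $\D^2 g(u(0))(u'(0))$, hence $v'(0) = r'(0) - \D^2 g(u(0))(u'(0))$ exists. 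Applying Mignot's formula to $u(t) = \proj_\K(v(t))$ then yields
$$
u'(0) = \proj_{C_{v(0)}}\bigl(r'(0) - \D^2 g(u(0))(u'(0))\bigr).
$$

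The final step is to recognize the right-hand side as a proximal operator. Writing $C := C_{v(0)}$, $\varphi_f := \iota_C$, and $A := \D^2 g(u(0))$ (a bounded, self-adjoint, positive semidefinite operator since $g$ is convex and twice differentiable), the identity above reads $u'(0) = \proj_C(r'(0) - A\,u'(0))$, i.e.
$$
r'(0) - u'(0) - A\,u'(0) \in \partial \iota_C(u'(0)) = N_C(u'(0)),
$$
which rearranges to $r'(0) - u'(0) \in N_C(u'(0)) + A\,u'(0) = \partial\varphi_f(u'(0)) + \nabla\psi_g(u'(0))$, where $\psi_g(x) := \tfrac12\langle Ax, x\rangle$ so that $\nabla\psi_g(x) = Ax$ (using self-adjointness of $A$). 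Since $\varphi_f \in \Gamma_0(\H)$, $\psi_g \in \Gamma_0(\H)$ is differentiable on $\H$, and $\dom(\varphi_f)\cap\int(\dom(\psi_g)) = C \neq \emptyset$, the additivity condition holds by Remark~\ref{remdsum}, so $\partial(\varphi_f+\psi_g) = \partial\varphi_f + \nabla\psi_g$; hence the inclusion says exactly $u'(0) = (\I + \partial(\varphi_f+\psi_g))^{-1}(r'(0)) = \prox_{\varphi_f+\psi_g}(r'(0))$, as claimed.

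\textbf{Main obstacle.} The computational core is routine once the pieces are in place; the genuine difficulty is the invocation of Mignot's conical differentiability result, specifically verifying that its hypotheses are met in the form needed here — that is, that the density condition~(iii) on $O_{v(0)} \subset C_{v(0)}$ is precisely the hypothesis Mignot requires for $\proj_\K$ to be (one-sided, and here two-sided thanks to~(iv)) directionally differentiable at $v(0)$ with derivative $\proj_{C_{v(0)}}$, and that $C_{v(0)}$ as defined is the relevant "critical cone." One must also be slightly careful that assumption~(iv) is what promotes the generically one-sided conical derivative to a genuine (two-sided) derivative of $u(\cdot)$ at $t=0$; without~(iv) one would only obtain a directional-derivative statement. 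I would isolate these points as the place where the argument really rests, the rest being bookkeeping with subdifferential calculus under the Moreau--Rockafellar qualification.
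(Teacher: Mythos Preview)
Your proof is correct and follows essentially the same route as the paper: identify $v(t)=\prox^f_g(r(t))$ so that $u(t)=\proj_\K(v(t))$, differentiate $v$ at $0$, apply Mignot's result to get $u'(0)=\proj_{C_{v(0)}}(v'(0))$, and then recognize the resulting fixed-point relation as the optimality condition for $\prox_{\varphi_f+\psi_g}(r'(0))$. The only cosmetic difference is in the last step: you pass directly from the inclusion $r'(0)-u'(0)\in\partial\varphi_f(u'(0))+\nabla\psi_g(u'(0))$ to the proximal characterization, whereas the paper rewrites the relation as $v'(0)=\prox^{\varphi_f}_{\psi_g}(r'(0))$ and invokes the decomposition formula~\eqref{eqdecomp} a second time---a choice that showcases Theorem~\ref{thm1} once more but is logically equivalent to yours.
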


\begin{proof}
Note that $v$ is differentiable at $t = 0$ with
$$ v'(0) = r'(0) - \D^2 g (u(0))(u'(0)). $$
Note that $\prox^f_g$ is single-valued from Proposition~\ref{prop2} and Remark~\ref{remdsum}. From the decomposition formula~\eqref{eqdecomp} in Theorem~\ref{thm1}, one can easily obtain that
$$
v(t) = \prox^f_g (r(t)), \qquad \text{and thus} \qquad u(t) = \prox_{f} \circ \prox^f_{g} ( r(t) ) = \proj_{\K} (v(t)) ,
$$
for all $t \geq 0$. Since $O_{v(0)}$ is dense in $C_{v(0)}$, we use the asymptotic expansion of~F.~Mignot~\cite[Theorem~2.1 p.145]{Mig76} and we obtain that
$$
u'(0) = \proj_{C_{v(0)}} ( v'(0) ) .
$$
We deduce that
$$
v'(0) + \D^2 g(u(0)) \circ \proj_{C_{v(0)}} (v'(0)) = r'(0) .
$$
Since $g$ is convex and since $C_{v(0)}$ is a nonempty closed convex subset of $\H$, we deduce that $\varphi_f$, $\psi_g \in \Gamma_0(\H)$. Moreover $\partial (\varphi_f + \psi_g ) = \partial \varphi_f + \partial \psi_g$ from Remark~\ref{remdsum} and $\prox^{\varphi_f}_{\psi_g}$ is single-valued from Proposition~\ref{prop2}. It also should be noted that~$\nabla \psi_g = \D^2 g ( u(0) )$. As a consequence, we have obtained that
$$
v'(0) + \nabla \psi_g \circ \prox_{\varphi_f} (v'(0)) = r'(0),
$$
that is, $v'(0) = \prox^{\varphi_f}_{\psi_g} (r'(0))$. We conclude the proof from the equality $u'(0) = \prox_{\varphi_f} ( v'(0) )$ and from Theorem~\ref{thm1}.
\end{proof}

\begin{remark}
Proposition~\ref{PropSensitivity} provides an expression of $u'(0)$ in terms of the proximal operator of a sum of two proper, lower semicontinuous and convex functions. Hence, it could be numerically computed from Algorithm~\eqref{eqalgo}, requiring the knowledge of $\proj_{C_{v(0)}}$ and $\prox_{\psi_g}$. 
Alternatively, if the convergence is proved, one can also consider Algorithm~\eqref{eqalgo3} requiring the knowledge of $\proj_{C_{v(0)}}$ and $\nabla \psi_g = \D^2 g(u(0))$.
\end{remark}

\begin{remark}\label{remfinal}
The relaxations in special frameworks of the assumptions of Proposition~\ref{PropSensitivity} should be the subject of future works. 
In particular, it would be relevant to provide sufficient conditions ensuring that $u$ is differentiable at $t=0$. A promising idea in this sense is to invoke the concepts of \textit{twice epi-differentiability} and \textit{proto-differentiability} introduced by R.T.~Rockafellar in \cite{Rockepidiff,Rockproto}.
\end{remark}

The application of Proposition~\ref{PropSensitivity} in the context of some shape optimization problems with unilateral contact and friction is the subject of a forthcoming research paper (work in progress).

\appendix

\section{A nonexistence result for a closed formula}\label{app}
The aim of the present appendix is to prove that there is no closed formula, independent of $f$ and $g$, allowing to write $\prox_{f+g}$ as a linear combination of compositions of linear combinations of $\I$, $\prox_f$, $\prox_g$, $\prox_f^{-1}$ and $\prox_g^{-1}$. 

\medskip

For this purpose, let us introduce the elementary operator $\P^{\mu}_{f,g} : \H \rightrightarrows \H$ defined by
$$ \P^{\mu}_{f,g} := a \, \I + b \, \prox_f + c \, \prox_g + d \, \prox_f^{-1} + e \, \prox_g^{-1}, $$
for all $\mu = (a,b,c,d,e) \in \R^5$, all $f$, $g \in \Gamma_0(\H)$ and all Hilbert spaces $\H$. Let us assume by contradiction that for all Hilbert spaces $\H$, there exist $m$, $n \in \N^*$, $\lambda = (\lambda_i)_i \in \R^m$ and $(\mu_{ij})_{ij} \in (\R^5)^{m \times n}$ such that
\begin{equation}\label{eq654123}
 \prox_{f+g} = \di \sum_{i=1}^m \lambda_i \left( \prod_{j=1}^n \P^{\mu_{ij}}_{f,g} \right) ,
\end{equation}
for all $f$, $g \in \Gamma_0(\H)$, where $\prod$ denotes finite composition of operators.

\medskip

Then, let us consider the one-dimensional setting $\H = \R$ with $f(x) = g(x) = \frac{\gamma}{2} x^2$ for all $ x \in \R$ and all $\gamma \geq 0$. In that case $\prox_{f+g}$ is the linear function with slope $\frac{1}{1+2\gamma}$ and each $\P^{\mu_{ij}}_{f,g}$ is the linear function with slope
$$ \dfrac{(b_{ij}+c_{ij})+a_{ij}(1+\gamma) + (d_{ij}+e_{ij})(1+\gamma)^2}{1+\gamma}, $$
for all $\gamma \geq 0$. We deduce from Equality~\eqref{eq654123} that
$$ \dfrac{1}{1+2\gamma} = \di \sum_{i=1}^m \lambda_i \left( \prod_{j=1}^n \dfrac{(b_{ij}+c_{ij})+a_{ij}(1+\gamma) + (d_{ij}+e_{ij})(1+\gamma)^2}{1+\gamma} \right), $$
for all $\gamma \geq 0$, where $\prod$ denotes now the classical finite product of real numbers. We get that
$$ (1+\gamma)^n = (1+2\gamma) \di \sum_{i=1}^m \lambda_i \Big( \prod_{j=1}^n (b_{ij}+c_{ij})+a_{ij}(1+\gamma) + (d_{ij}+e_{ij})(1+\gamma)^2 \Big), $$
for all $\gamma \geq 0$. We easily deduce that the above polynomial equality can be extended to all $\gamma \in \R$, and thus it raises a contradiction for $\gamma = -\frac{1}{2}$.

\bibliographystyle{abbrv}

\end{document}